\newtheorem{thm}{Theorem}[section]
\newcommand{\ee}{\end{equation}}
\newenvironment {proof} {\noindent {\bf Proof.}}{\quad $\square$\par\vspace{3mm}}
\newtheorem{dif}{Definition}[section]
\newtheorem{lema}{Lemma}[section]
\newtheorem{remark}{Remark}[section]
\begin{document}
\newcommand{\D}{\displaystyle}

\title{The existence and blow-up criterion of liquid crystals system in critical Besov space
\footnotetext[0]{2010 Mathematics Subject Classification. Primary: 76N10, 35Q35, 35Q30. }
\footnotetext[0]{This work was supported partly by NSFC grant 11071043, 11131005, 11071069.}
}
\author{Yi-hang Hao\footnote{{\it E-mail address}: 10110180022@fudan.edu.cn.}\qquad Xian-gao Liu\footnote{{\it E-mail address}: xgliu@fudan.edu.cn,}
\\{ \it\small  School of Mathematic Sciences, Fudan University,
Shanghai, 200433, P.R.China }}
\date{}

\maketitle
\begin{abstract}
We consider the existence of strong solution to liquid crystals system in critical Besov space, and give a criterion which is similar to Serrin's criterion on regularity of weak solution to Navier-Stokes equations.
\end{abstract}
\noindent{\bf Key words:} Littlewood-Paley theory, liquid crystals, strong solution.

\date{}

 \vspace{3mm}
\maketitle


\section {Introduction}
In this paper, we study the following incompressible liquid crystals system:
\begin{eqnarray}
\left\{\begin{array}{ll}
& \mbox{{\bf div}}u=0, \\
&u_t+u\cdot\nabla u-\mu\Delta u+\nabla p=-\nabla(\nabla d\odot\nabla d),\\
&d_t+ u\cdot\nabla d-\Delta d-|\nabla d|^2d=0,
\end{array}\right.\label{prob1}
\end{eqnarray}
on $(0,T)\times R^\mathbb{N}$. Here $u$ is the velocity, $p$ is the pressure, $d$ is the macro-scopic average of molecular arrangement, $\nabla d\odot\nabla d$ is a matrix, whose $(i,j)-\text{th}$ entry  is $d_{k,i}d_{k,j}$. Let's consider system (\ref{prob1}) with the following initial conditions
\begin{eqnarray}
u|_{t=0}=u_0(x),\quad d|_{t=0}=d_0(x),\quad |d_0(x)|=1,\label{ini}
\end{eqnarray}
and far field behaviors
\begin{eqnarray}
u\rightarrow 0,\ d\rightarrow \bar{d_0},\ as\ |x|\rightarrow\infty.\label{ini1}
\end{eqnarray}
Here $\bar{d_0}$ is a constant vector with $|\bar{d_0}|=1$.
As it is difficult to deal with the high order term $|\nabla d|^2d$, Lin-Liu \cite{LinLiu1,LinLiu2} proposed to investigate an approximate model of liquid crystals system by Ginzburg-Landau function. The approximate liquid crystals system reads as follows
\begin{eqnarray}
\left\{\begin{array}{ll}
& \mbox{{\bf div}}u=0, \\
&u_t+u\cdot\nabla u-\mu\Delta u+\nabla p=-\nabla(\nabla d\odot\nabla d),\\
&d_t+ u\cdot\nabla d-\Delta d+\frac{1}{\epsilon^2}(1-|d|^2)d=0.
\end{array}\right.\label{approxi}
\end{eqnarray}
\par The hydrodynamic theory of nematic liquid crystals was established by Ericksen \cite{eri} and Lieslie \cite{les} in 1950s. Lin \cite{Linfh} introduced a simple model and studied it from mathematical viewpoint. Since then, many applauding results to system (\ref{approxi}) with (\ref{ini}) and (\ref{ini1}) have been established: {\bf1.} the global existence of weak solutions, see \cite{LinLiu2,LinLiu3}; {\bf2.} the global existence of classical solution for small initial data or large viscosity, see \cite{LinLiu3,WuXuLiu}; the local existence of classical solution for general initial data, see \cite{LinLiu3}; {\bf3.} the partial regularity property of the weak solutions, see \cite{LinLiu1}; {\bf4.} the stability and long time behavior of strong solution, see \cite{WuXuLiu}.
 \par From the viewpoint of partial differential equations, system (\ref{prob1}) is a strongly coupled system between the incompressible Navier-Stokes equations and the transported heat flow of harmonic maps. It
 is difficult to get the global existence for general initial data, even the existence of weak solutions in three-dimensional space. However, Lin et al\cite{cywang} and Hong\cite{hmc} obtained the global weak solutions in two-dimensional space. In this paper, a solution is called to be a strong solution if the uniqueness holds. The well-posedness  of system  (\ref{prob1})-(\ref{ini1}) has been studied by Li and Wang\cite{Liwang}, Lin and Ding\cite{dsj}, Wang\cite{wang}. We are interesting in the local and global existence of strong solution to system (\ref{prob1})-(\ref{ini1}) in this paper. We will also consider a criterion for system (\ref{prob1}), which is similar to serrin's criterion on regularity of weak solution to Navier-Stokes equations. A natural way of dealing with uniqueness is to find a function space as large as possible, where the existence and uniqueness of solution hold. In a word, we need to find a ''critical'' space. This approach has been initiated by Fujita and Kato \cite{fujta} on Navier-Stokes equations. We give the scaling of $(u,p,d)$ as follows
\begin{eqnarray}
u_\lambda(t,x)=\lambda u(\lambda^2t,\lambda x),\ p_\lambda(t,x)=\lambda^2p(\lambda^2t,\lambda x),\ d_\lambda(t,x)=d(\lambda^2t,\lambda x),\nonumber
\end{eqnarray}
where $(u,p,d)$ is a solution of (\ref{prob1})-(\ref{ini1}). We note that $(u_\lambda,p_\lambda,d_\lambda)$ still satisfies system (\ref{prob1}) with initial data
\begin{eqnarray}
u_{\lambda 0}=\lambda u_0(\lambda x),\ d_{\lambda 0}=d_0(\lambda x).\nonumber
\end{eqnarray}
So it is natural to give the following definition:
a space $X$ is called critical space if
\begin{eqnarray}
\forall\ u\in X,\qquad \|u_\lambda\|_X=\|u\|_X,\quad for\ all\ \lambda>0.\nonumber
\end{eqnarray}
In \cite{fujta}, Fujita and Kato used $\dot{H}^{\frac{1}{2}}$ as the critical space on Navier-Stokes equations in three-dimensional space. In this paper, we consider the homogeneous Besov spaces $\dot{B}_{p,r}^s$. Inspired by \cite{Danchin1}, we find
\begin{eqnarray}
\|u_\lambda(t,\cdot)\|_{\dot{B}_{2,1}^{\frac{\mathbb{N}}{2}-1}}=\|u(t,\cdot)\|_{\dot{B}_{2,1}^{\frac{\mathbb{N}}{2}-1}},\quad \|d_\lambda(t,\cdot)\|_{\dot{B}_{2,1}^{\frac{\mathbb{N}}{2}}}=\|d(t,\cdot)\|_{\dot{B}_{2,1}^{\frac{\mathbb{N}}{2}}},\nonumber
\end{eqnarray}
where $\mathbb{N}$ denotes the spacial dimension. In addition, we suppose $|d|=1$ in physics and note that
\begin{eqnarray}
\dot{H}^{\frac{\mathbb{N}}{2}-1}\nsubseteq L^\infty,\qquad \dot{B}_{2,1}^{\frac{\mathbb{N}}{2}}\hookrightarrow L^\infty.\nonumber
\end{eqnarray}
So it is natural to choose $\dot{B}_{2,1}^{\frac{\mathbb{N}}{2}-1}$(for $u$) and $\dot{B}_{2,1}^{\frac{\mathbb{N}}{2}}$(for $d$) as the critical spaces in which we study liquid crystals system (\ref{prob1}).
We are able to prove the following theorems.
\begin{thm}\label{th1}
Let $\mathbb{N}\geq2$, $\bar{d_0}$ be a constant vector with $|\bar{d_0}|=1$, $u_0\in\dot{B}_{2,1}^{\frac{\mathbb{N}}{2}-1}$, $(d_0-\bar{d_0})\in\dot{B}_{2,1}^{\frac{\mathbb{N}}{2}}$ in (\ref{ini}), (\ref{ini1}), and
\begin{eqnarray}
E_0=\|u_0\|_{\dot{B}_{2,1}^{\frac{\mathbb{N}}{2}-1}}+\|(d_0-\bar{d_0})\|_{\dot{B}_{2,1}^{\frac{\mathbb{N}}{2}}}.\nonumber
\end{eqnarray}
\flushleft
\begin{enumerate}
\begin{description}
\item[I.] There exists a constant $T>0$, such that system (\ref{prob1})-(\ref{ini1}) has a unique strong solution $(u,\ d)$ on $[0,T]\times R^\mathbb{N}$,
\begin{eqnarray}
(u,\ d-\bar{d_0})\in\widetilde{L}^1_T(\dot{B}_{2,1}^{\frac{\mathbb{N}}{2}+1})\cap C([0,T];\dot{B}_{2,1}^{\frac{\mathbb{N}}{2}-1})\times\widetilde{L}^1_T(\dot{B}_{2,1}^{\frac{\mathbb{N}}{2}+2})\cap C([0,T];\dot{B}_{2,1}^{\frac{\mathbb{N}}{2}}).\nonumber
\end{eqnarray}
In addition, for any $\rho_1,\ \rho_2\in [1,\infty]$,
\begin{eqnarray}
\|u\|_{\widetilde{L}^{\rho_1}_T(\dot{B}_{2,1}^{\frac{\mathbb{N}}{2}-1+\frac{2}{\rho_1}})}+
\|(d-\bar{d_0})\|_{\widetilde{L}^{\rho_2}_T(\dot{B}_{2,1}^{\frac{\mathbb{N}}{2}+\frac{2}{\rho_2}})}\leq C(E_0)\nonumber
\end{eqnarray}
holds.
\item[II.] There exists a constant $\delta_0\geq0$, such that if $E_0\leq\delta_0$, system (\ref{prob1})-(\ref{ini1}) has a unique global strong solution $(u,\ d)$ on $[0,\infty)\times R^\mathbb{N}$,
\begin{eqnarray}
(u,\ d-\bar{d_0})\in\widetilde{L}^1(\dot{B}_{2,1}^{\frac{\mathbb{N}}{2}+1})\cap C([0,\infty);\dot{B}_{2,1}^{\frac{\mathbb{N}}{2}-1})\times\widetilde{L}^1(\dot{B}_{2,1}^{\frac{\mathbb{N}}{2}+2})\cap C([0,\infty);\dot{B}_{2,1}^{\frac{\mathbb{N}}{2}}).\nonumber
\end{eqnarray}
In addition, for any $\rho_1,\ \rho_2\in [1,\infty]$,
\begin{eqnarray}
\|u\|_{\widetilde{L}^{\rho_1}(\dot{B}_{2,1}^{\frac{\mathbb{N}}{2}-1+\frac{2}{\rho_1}})}+
\|(d-\bar{d_0})\|_{\widetilde{L}^{\rho_2}(\dot{B}_{2,1}^{\frac{\mathbb{N}}{2}+\frac{2}{\rho_2}})}\leq C(\delta_0)\nonumber
\end{eqnarray}
holds.
\end{description}
\end{enumerate}
\end{thm}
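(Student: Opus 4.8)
The plan is to prove Theorem \ref{th1} by the classical Friedrichs-type Galerkin/mollification scheme adapted to the Littlewood--Paley setting, following the strategy Danchin developed for compressible Navier--Stokes. Writing $d=\bar d_0+e$, the system for $(u,e)$ becomes a pair of heat equations with quadratic (and cubic, via $|\nabla e|^2(\bar d_0+e)$) right-hand sides, coupled through the transport terms $u\cdot\nabla u$, $u\cdot\nabla e$ and the stress term $\nabla(\nabla d\odot\nabla d)=\nabla(\nabla e\odot\nabla e)$. First I would set up the approximate system by truncating frequencies (replacing each term by its $S_n$-projection and, for the nonlinear terms, composing with the spectral cutoff), which yields an ODE in a Banach space; local solvability and a continuation criterion follow from the Cauchy--Lipschitz theorem.

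Next comes the heart of the argument: uniform a priori estimates for the approximate solutions in $\widetilde L^\infty_T(\dot B_{2,1}^{\frac{\mathbb N}{2}-1})\cap\widetilde L^1_T(\dot B_{2,1}^{\frac{\mathbb N}{2}+1})$ for $u$ and $\widetilde L^\infty_T(\dot B_{2,1}^{\frac{\mathbb N}{2}})\cap\widetilde L^1_T(\dot B_{2,1}^{\frac{\mathbb N}{2}+2})$ for $e$. The scheme is: apply $\dot\Delta_j$ to each equation, use the smoothing estimate for the heat semigroup on dyadic blocks (which gains the two derivatives and supplies the $\widetilde L^1_T$ control), commute $\dot\Delta_j$ past the transport term and bound the commutator by the standard commutator lemma, and estimate the quadratic/cubic terms by the paraproduct and Bony decomposition together with the product laws $\dot B_{2,1}^{s_1}\cdot\dot B_{2,1}^{s_2}\hookrightarrow\dot B_{2,1}^{s_1+s_2-\frac{\mathbb N}{2}}$ valid on the relevant range of indices; for the cubic term one also uses that $\dot B_{2,1}^{\frac{\mathbb N}{2}}$ is an algebra and is embedded in $L^\infty$. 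Summing in $j$ with the $\ell^1$ weights and absorbing the top-order dissipative terms, one arrives at an integral inequality of the form $X(T)\le C E_0+C\,X(T)^2+C\,X(T)^3$, where $X(T)$ collects all the norms above. A continuity/bootstrap argument then gives a time $T>0$ (depending on $E_0$ through the profile of the low/high frequency split of the data, as is standard when the data lies only in $\dot B_{2,1}$ rather than a small ball) on which $X(T)\le 2CE_0$, and when $E_0\le\delta_0$ is small the quadratic term can be absorbed for all time, giving part II. The additional bound for general $\rho_1,\rho_2$ follows by interpolating the $\widetilde L^\infty_T$ and $\widetilde L^1_T$ endpoint estimates.

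The remaining steps are passage to the limit and uniqueness. For convergence I would show the approximate sequence is Cauchy in a weaker norm — the natural ``loss of one derivative'' space $\widetilde L^\infty_T(\dot B_{2,1}^{\frac{\mathbb N}{2}-2})\times\widetilde L^\infty_T(\dot B_{2,1}^{\frac{\mathbb N}{2}-1})$ together with the corresponding $\widetilde L^1_T$ spaces — by writing the equation for the difference of two iterates and running the same energy estimate at one level of regularity lower; the uniform bounds at the higher level plus interpolation then upgrade convergence and show the limit lies in the claimed spaces and is continuous in time (the $C([0,T];\cdot)$ statement uses that a uniform bound in $\widetilde L^\infty_T(\dot B_{2,1}^{s})$ plus continuity of each dyadic block gives continuity into $\dot B_{2,1}^{s}$ when the sum is uniformly $\ell^1$). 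Uniqueness is precisely this same difference estimate combined with Gronwall. The main obstacle I anticipate is the bookkeeping of the nonlinear estimates: the stress term $\nabla(\nabla e\odot\nabla e)$ lives at the top of the regularity window, so one must be careful that the product law and the commutator estimate close exactly at the endpoints $\frac{\mathbb N}{2}+1$ (for $u$) and $\frac{\mathbb N}{2}+2$ (for $e$) without losing summability in $j$; this is the step where the choice $r=1$ in $\dot B_{2,1}^{\cdot}$ is essential, and it is where the $\mathbb N\ge 2$ hypothesis enters (to keep the negative-index product laws meaningful).
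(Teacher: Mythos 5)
Your construction differs from the paper's: you propose a Friedrichs spectral-truncation scheme plus uniform a priori estimates and compactness, whereas the paper runs a Picard iteration on the linear heat equations (\ref{linear}), closing the bounds with the paraproduct lemmas (Lemmas \ref{lemma1}--\ref{lemma2}) and the maximal regularity estimate (Lemma \ref{lemma3}), and then proves the iterates are Cauchy. Either construction can work, and your interpolation argument for the general $(\rho_1,\rho_2)$ bound and the continuity-in-time statement are in line with what the paper does. The substantive divergence is in \emph{how smallness is generated for large data}, and that is where your write-up has a genuine gap. The inequality you announce, $X(T)\le CE_0+CX(T)^2+CX(T)^3$ with constants independent of $T$, cannot produce a nontrivial existence time when $E_0$ is large: there is no small parameter to run the continuity argument on. The paper's mechanism is Lemma \ref{lemma4}: for fixed data and any $\rho<\infty$, the free evolution $e^{t\Delta}(u_0,\tau_0)$ is arbitrarily small in $\widetilde{L}^\rho_T(\dot B^{s+2/\rho}_{2,1})$ as $T\to0$, and the iteration is closed \emph{around the free solution}, keeping the quadratic terms small through the $\widetilde L^4_T(\dot B^{\frac{\mathbb N}{2}-\frac12}_{2,1})\times\widetilde L^4_T(\dot B^{\frac{\mathbb N}{2}+\frac12}_{2,1})$ norms of size $\varepsilon_0$ while only the $\widetilde L^\infty_T(\dot B^{\frac{\mathbb N}{2}}_{2,1})$ norm of $\tau$ stays of size $E_0$ (estimates (\ref{unb1})--(\ref{unb2})). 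Your parenthetical remark about ``the profile of the low/high frequency split of the data'' is exactly the missing step; it has to be carried out (either via this free-evolution smallness or an explicit $S_J$/$(I-S_J)$ splitting feeding a factor that vanishes with $T$ into the quadratic terms), otherwise Part I is only proved for small $E_0$, i.e.\ Part II.

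Two smaller points. First, the commutator estimate for the transport term is unnecessary here: the solution spaces give $u\cdot\nabla u$, $u\cdot\nabla\tau$, $\nabla(\nabla\tau\odot\nabla\tau)$ and $|\nabla\tau|^2(\tau+\bar d_0)$ directly as forcing terms controlled by the product laws, which is how the paper treats them; this is harmless but hides no difficulty. Second, running the difference (Cauchy/uniqueness) estimate ``one derivative lower,'' i.e.\ in $\dot B^{\frac{\mathbb N}{2}-2}_{2,1}\times\dot B^{\frac{\mathbb N}{2}-1}_{2,1}$, is risky at the claimed dimension $\mathbb N=2$: terms like $w\cdot\nabla u$ then require a product at total regularity $s_1+s_2=0$, which is exactly the forbidden endpoint in the remainder estimate of Lemma \ref{lemma2}. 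The paper avoids this by contracting at the \emph{same} level $\widetilde L^\infty_T(\dot B^{\frac{\mathbb N}{2}-1}_{2,1})\times\widetilde L^\infty_T(\dot B^{\frac{\mathbb N}{2}}_{2,1})$, paying with $\widetilde L^2_T$ norms of the solutions (which are small by (\ref{small})) in the estimates of the Duhamel terms $I_1,\dots,I_5$; you should either do the same or justify the endpoint product when $\mathbb N=2$.
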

Space $\widetilde{L}^\rho_T(\dot{B}^s_{p,r})$ and $\widetilde{L}^\rho(\dot{B}^s_{p,r})$ will be introduced in section \ref{2}.
We want to point out that if $(u,\ d)$ is smooth, then we can proof $|d|=|d_0|=1$ by maximal principle. Wang\cite{wang}, Lin and Ding\cite{dsj} proved $|d|=|d_0|=1$ by taking $|\nabla d|^2d$ as a second fundamental form.
The following result is inspired by Kozono and Shimada\cite{hk}.
\begin{thm}\label{th2}
Let $(u,\ d)$ be a strong solution of (\ref{prob1})-(\ref{ini1}) on $[0,T]\times R^{\mathbb{N}}$ in Theorem \ref{th1}. For any $T'>T$, such that if
\begin{eqnarray}
&u\in\widetilde{L}_{T'}^{\rho_1}(\dot{B}_{\infty,\infty}^{-1+\frac{2}{\rho_1}}),\  d-\bar{d_0}\in\widetilde{L}_{T'}^{\rho_2}(\dot{B}_{\infty,\infty}^{\frac{2}{\rho_2}})\ and\ d-\bar{d_0}\in\widetilde{L}_{T'}^{\rho_3}(\dot{B}_{2,\infty}^{\frac{\mathbb{N}}{2}+\frac{2}{\rho_3}}),\nonumber\\
&\frac{\mathbb{N}}{2}+\frac{2}{\rho_2}+\frac{2}{\rho_3}-2>0,\nonumber\\
&\|u\|_{\widetilde{L}_{T'}^{\rho_1}(\dot{B}_{\infty,\infty}^{-1+\frac{2}{\rho_1}})}+ \|d-\bar{d_0}\|_{\widetilde{L}_{T'}^{\rho_2}(\dot{B}_{\infty,\infty}^{\frac{2}{\rho_2}})}+
\|d-\bar{d_0}\|_{\widetilde{L}_{T'}^{\rho_3}(\dot{B}_{2,\infty}^{\frac{\mathbb{N}}{2}+\frac{2}{\rho_3}})}<\infty\label{serrin1}
\end{eqnarray}
hold, where $(\rho_1,\ \rho_2,\ \rho_3)\in (2,\infty)^3$,
then $(u,\ d)$ is a strong solution on $[0,T']\times R^{\mathbb{N}}$ in Theorem \ref{th1}.
\end{thm}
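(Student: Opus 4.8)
The strategy is the classical continuation argument: assuming the Serrin-type norm in \eqref{serrin1} is finite on $[0,T']$, we show that the solution norm in the energy space $\widetilde{L}^\infty(\dot B_{2,1}^{\mathbb{N}/2-1})\times\widetilde{L}^\infty(\dot B_{2,1}^{\mathbb{N}/2})$ cannot blow up as $t\uparrow T'$, so that Theorem \ref{th1} Part I applied at a time close to $T'$ extends the solution past $T'$. First I would set $v=d-\bar{d_0}$, rewrite the $d$-equation as $v_t+u\cdot\nabla v-\Delta v=|\nabla d|^2 d$, and localize both equations with the Littlewood--Paley blocks $\dot\Delta_q$. Applying $\dot\Delta_q$, taking the $L^2$ inner product with $\dot\Delta_q u$ and $\dot\Delta_q v$ respectively, using the divergence-free condition to kill the pressure and the transport term's leading contribution, and invoking Bernstein's inequality for the dissipation, one obtains the standard differential inequalities
\begin{eqnarray}
\frac{d}{dt}\|\dot\Delta_q u\|_{L^2}+c2^{2q}\|\dot\Delta_q u\|_{L^2}&\leq& \|\dot\Delta_q(u\cdot\nabla u)\|_{L^2}+\|\dot\Delta_q\nabla(\nabla d\odot\nabla d)\|_{L^2},\nonumber\\
\frac{d}{dt}\|\dot\Delta_q v\|_{L^2}+c2^{2q}\|\dot\Delta_q v\|_{L^2}&\leq& \|\dot\Delta_q(u\cdot\nabla v)\|_{L^2}+\|\dot\Delta_q(|\nabla d|^2 d)\|_{L^2}.\nonumber
\end{eqnarray}
Integrating in time, multiplying by $2^{q(\mathbb{N}/2-1)}$ resp. $2^{q\mathbb{N}/2}$, and summing over $q\in\ZZ$ produces an estimate for $\|u\|_{\widetilde L^\infty_t(\dot B_{2,1}^{\mathbb{N}/2-1})}+\|v\|_{\widetilde L^\infty_t(\dot B_{2,1}^{\mathbb{N}/2})}$ plus the corresponding $\widetilde L^1_t$ norms one degree of regularity higher, in terms of the nonlinear terms.

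The core of the argument is to bound those nonlinear terms by the \emph{subcritical} Serrin norms of \eqref{serrin1} times the critical energy norm, with the regularity budget balancing exactly so that Young's inequality in time closes the loop. Using the paraproduct decomposition $fg=T_fg+T_gf+R(f,g)$ and the tame product/commutator estimates from section \ref{2}, I would estimate, schematically,
\begin{eqnarray}
\|u\cdot\nabla u\|_{\widetilde L^1_t(\dot B_{2,1}^{\mathbb{N}/2-1})}&\lesssim& \|u\|_{\widetilde L^{\rho_1}_t(\dot B_{\infty,\infty}^{-1+2/\rho_1})}\,\|u\|_{\widetilde L^{\rho_1'}_t(\dot B_{2,1}^{\mathbb{N}/2+1-2/\rho_1})},\nonumber
\end{eqnarray}
and similarly for $\nabla(\nabla d\odot\nabla d)$ and $u\cdot\nabla v$, while the delicate term $|\nabla d|^2 d=\nabla v\cdot\nabla v\,(v+\bar d_0)$ is handled by first using $d-\bar d_0\in\widetilde L^{\rho_3}_t(\dot B_{2,\infty}^{\mathbb{N}/2+2/\rho_3})$ together with $d-\bar d_0\in\widetilde L^{\rho_2}_t(\dot B_{\infty,\infty}^{2/\rho_2})$ — the condition $\mathbb{N}/2+2/\rho_2+2/\rho_3-2>0$ is precisely what guarantees the product $\nabla v\cdot\nabla v$ lands in a space with enough regularity to be absorbed, and the boundedness of $d$ (recall $|d|=1$, or at least $|d|\le C$) controls the cubic factor. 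In each case the high-regularity factor is the energy norm being estimated, so after summing and applying Hölder in $t$ one gets, for every $t\le T'$,
\begin{eqnarray}
\mathcal E(t):=\|u\|_{\widetilde L^\infty_t(\dot B_{2,1}^{\mathbb{N}/2-1})}+\|v\|_{\widetilde L^\infty_t(\dot B_{2,1}^{\mathbb{N}/2})}+\|u\|_{\widetilde L^1_t(\dot B_{2,1}^{\mathbb{N}/2+1})}+\|v\|_{\widetilde L^1_t(\dot B_{2,1}^{\mathbb{N}/2+2})}\leq C E_0 + C\,A(t)^{\theta}\,\mathcal E(t),\nonumber
\end{eqnarray}
where $A(t)$ collects the finite Serrin norms of \eqref{serrin1} restricted to $[0,t]$ and $\theta>0$.

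Since $A(t)\to 0$ as the time interval shrinks (because $(\rho_1,\rho_2,\rho_3)\in(2,\infty)^3$ are all finite, the $\widetilde L^{\rho_i}$-norms over $[\tau,t]$ are absolutely continuous in the endpoints), I would partition $[0,T']$ into finitely many subintervals on each of which $C A^\theta\le 1/2$, and bootstrap: on the first subinterval $\mathcal E\le 2CE_0$, hence the solution stays in the critical space; then restart from the right endpoint with the new (finite) data norm, and repeat. After finitely many steps we conclude $\mathcal E(T')<\infty$, in particular $u(T')\in\dot B_{2,1}^{\mathbb{N}/2-1}$ and $d(T')-\bar d_0\in\dot B_{2,1}^{\mathbb{N}/2}$; applying Part I of Theorem \ref{th1} with initial time $T'-\eta$ for small $\eta$ yields a strong solution on $[T'-\eta,T'+\eta']$, which by uniqueness coincides with $(u,d)$ and thus extends it to $[0,T']$ (indeed slightly beyond), giving the claim. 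The main obstacle I anticipate is the bookkeeping for the term $|\nabla d|^2 d$: one must split off $\bar d_0$, exploit that $|d|\le 1$ to tame the cubic nonlinearity without paying regularity, and verify that the three separate Serrin conditions on $d-\bar d_0$ combine — via the interpolation/product rules in the $\widetilde L^\rho(\dot B^s_{p,r})$ scale — to dominate $\nabla v\cdot\nabla v$ in $\widetilde L^1_t(\dot B_{2,1}^{\mathbb{N}/2})$; the inequality $\mathbb{N}/2+2/\rho_2+2/\rho_3-2>0$ is exactly the index condition that makes this product estimate valid, and checking it carefully (keeping track of which Besov index goes with which time exponent, and where the endpoint $r=\infty$ versus $r=1$ matters) is the technically demanding part.
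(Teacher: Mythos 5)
Your proposal is correct and follows essentially the same route as the paper: the paper likewise exploits that, because $(\rho_1,\rho_2,\rho_3)\in(2,\infty)^3$ are finite, the norms in (\ref{serrin1}) become small on sufficiently short subintervals $(T_2,T_2+\delta)\subset[T,T']$, absorbs the critical norm with a factor $\theta<1$ to obtain a bound independent of $T_2$, and then iterates (arguing against a first blow-up time) before invoking the local existence and uniqueness of Theorem \ref{th1}. The only cosmetic difference is that the paper gets the absorption estimate from the heat-semigroup maximal regularity of Lemma \ref{lemma3} together with the product estimates already used in the convergence step of the proof of Theorem \ref{th3}, rather than from blockwise energy differential inequalities.
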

It is well-known that using Besov space, one can get the same results in a lower regularity. And the following Remark is very interested.
\begin{remark}
By the embedding theory
\begin{eqnarray}
\widetilde{L}_T^{\rho_1}(\dot{B}_{p,\infty}^{\frac{\mathbb{N}}{p}-1+\frac{2}{\rho_1}})\hookrightarrow \widetilde{L}_T^{\rho_1}(\dot{B}_{\infty,\infty}^{-1+\frac{2}{\rho_1}}),\quad L^p\hookrightarrow \dot{B}_{p,\infty}^{0}\hookrightarrow \dot{B}_{\infty,\infty}^{-\frac{\mathbb{N}}{p}},\nonumber
\end{eqnarray}
we have
\begin{eqnarray}
L^{\rho_1}_TL^p\hookrightarrow \widetilde{L}_T^{\rho_1}(\dot{B}_{\infty,\infty}^{-1+\frac{2}{\rho_1}}),\quad -1+\frac{2}{\rho_1}=-\frac{\mathbb{N}}{p},\nonumber
\end{eqnarray}
which is the Serrin's criterion on endpoint.
\end{remark}
\begin{remark}
The proof of Theorem \ref{th2} implies that the life-span can be extended a little larger under the condition (\ref{serrin1}). Also we obtain that if $[0,T)$ is the life-span of strong solution to (\ref{prob1})-(\ref{ini1}), then (\ref{serrin1}) fails, that is
\begin{eqnarray}
\lim_{a\rightarrow T}\left(\|u\|_{\widetilde{L}_a^{\rho_1}(\dot{B}_{\infty,\infty}^{-1+\frac{2}{\rho_1}})}+ \|d-\bar{d_0}\|_{\widetilde{L}_a^{\rho_2}(\dot{B}_{\infty,\infty}^{\frac{2}{\rho_2}})}+
\|d-\bar{d_0}\|_{\widetilde{L}_a^{\rho_3}(\dot{B}_{2,\infty}^{\frac{\mathbb{N}}{2}+\frac{2}{\rho_3}})}\right)=\infty.\nonumber
\end{eqnarray}
\end{remark}
  The rest of our paper is organized as follows. In section \ref{2}, we give a short introduction on Besov space. In section \ref{3}, we prove Theorem \ref{th1} and Theorem \ref{th2}.

\section{Littlewood-Paley theory and Besov space}\label{2}
In this section, we present some well-known facts on Littlewood-paley theory, more details see \cite{book2,book1}. Let $S(R^\mathbb{N})$ be the Schwartz space and $S'(R^\mathbb{N})$ be its dual space. Let $(\chi,\varphi)$ be a couple of smooth functions such that
\begin{eqnarray}
&&\chi(x)=\left\{\begin{array}{ll}
1& x\in \mathcal{B}(0,\frac{3}{4}), \\
0&x\in R^\mathbb{N}\setminus \mathcal{B}(0,\frac{4}{3}),
\end{array}\right.\nonumber\\
&&\varphi(x)=\chi\left(\frac{x}{2}\right)-\chi(x).\nonumber
\end{eqnarray}
Then we have $\mbox{{\bf supp}}\varphi\subset \mathcal{C}(0,\frac{3}{4},\frac{8}{3})$, $\varphi\geq0,\ x\in R^\mathbb{N}$. Here we denote $\mathcal{B}(0,R)$ as an open ball with radius $R$ centered at zero, and $\mathcal{C}(0,R_1,R_2)$ as an annulus $\{x\in R^\mathbb{N}\ |\ R_1\leq|x|\leq R_2\}$. Then
 \begin{eqnarray}
\varphi_q(x):=\varphi\left(\frac{x}{2^q}\right),\ \mathrm{and}\ \mbox{{\bf supp}}\varphi_q\in \mathcal{C}\left(0,2^q\frac{3}{4},2^q\frac{8}{3}\right),\nonumber
\end{eqnarray}
For any $u\in S(R^\mathbb{N})$, the Fourier transform of $u$ denotes by $\widehat{u}$ or $\mathscr{F}u$. The inverse Fourier transform denotes by $\mathscr{F}^{-1}$. Let $h=\mathscr{F}^{-1}\varphi$. We define homogeneous dyadic blocks as follows
\begin{eqnarray}
&&\bigtriangleup_qu:=\varphi(2^{-q}D)u=2^{q\mathbb{N}}\int_{\mathbb{N}}h(2^qy)u(x-y)dy,\nonumber\\
&&S_qu:=\sum_{p\leq q-1}\bigtriangleup_pu.\nonumber
\end{eqnarray}
Then for any $u\in S'(R^\mathbb{N})$, the following decomposition
\begin{eqnarray}
u=\sum_{q=-\infty}^\infty\bigtriangleup_qu\nonumber
\end{eqnarray}
is called homogeneous Littlewood-Paley decomposition.\\
Let $u\in S'(R^\mathbb{N})$, $s$ be a real number, and $1\leq p,r\leq\infty$. We set
\begin{eqnarray}
\|u\|_{\dot{B}^s_{p,r}}:=\left(\sum_{q=-\infty}^\infty(2^{qs}\|\bigtriangleup_qu\|_{L^p})^r\right)^{\frac{1}{r}}.\nonumber
\end{eqnarray}
The besov space $\dot{B}^s_{p,r}$ is defined as follows:
\begin{eqnarray}
\dot{B}^s_{p,r}:=\left\{u\in S'(R^\mathbb{N})\left|\|u\|_{\dot{B}^s_{p,r}}\right.<\infty\right\}.\nonumber
\end{eqnarray}
\begin{dif}
Let $u\in S'((0,T)\times R^\mathbb{N})$, $s\in R$ and $1\leq p,r,\rho\leq\infty$. We define
\begin{eqnarray}
\|u\|_{\widetilde{L}^\rho_T(\dot{B}^s_{p,r})}:=\left(\sum_{q=-\infty}^\infty(2^{qs}\|\bigtriangleup_qu\|_{L^\rho_TL^p})^r\right)^{\frac{1}{r}}\nonumber
\end{eqnarray}
and
\begin{eqnarray}
\widetilde{L}^\rho_T(\dot{B}^s_{p,r}):=\left\{u\in S'((0,T)\times R^\mathbb{N})\left|\|u\|_{\widetilde{L}^\rho_T(\dot{B}^s_{p,r})}\right.<\infty\right\}.\nonumber
\end{eqnarray}
\end{dif}
Similarly, we set
\begin{eqnarray}
&&\|u\|_{\widetilde{L}^\rho(\dot{B}^s_{p,r})}:=\left(\sum_{q=-\infty}^\infty(2^{qs}\|\bigtriangleup_qu\|_{L^\rho(R^+;L^p)})^r\right)^{\frac{1}{r}},\nonumber\\
&&\|u\|_{\widetilde{L}_{(T_1,T_2)}^\rho(\dot{B}^s_{p,r})}:=\left(\sum_{q=-\infty}^\infty(2^{qs}\|\bigtriangleup_qu\|_{L^\rho\left((T_1,T_2);L^p\right)})^r\right)^{\frac{1}{r}},
\end{eqnarray}
and define the corresponding spaces $\widetilde{L}^\rho(\dot{B}^s_{p,r}),\ \widetilde{L}_{(T_1,T_2)}^\rho(\dot{B}^s_{p,r})$ respectively.  $L_T^\rho(\dot{B}^s_{p,r})$ is defined as usual.
\begin{remark}
According to Minkowski inequality, we have
\begin{eqnarray}
\|u\|_{\widetilde{L}_T^\rho(\dot{B}^s_{p,r})}\leq\|u\|_{L_T^\rho(\dot{B}^s_{p,r})}\quad if\quad r\geq\rho,\nonumber\\
\|u\|_{\widetilde{L}_T^\rho(\dot{B}^s_{p,r})}\geq\|u\|_{L_T^\rho(\dot{B}^s_{p,r})}\quad if\quad r\leq\rho.\nonumber
\end{eqnarray}
\end{remark}
Next let's present some results of paradifferential calculus in homogeneous space, which is useful for dealing with nonlinear equations. For $u,\ v\in S'(R^{\mathbb{N}})$, we have the following formal decomposition:
\begin{eqnarray}
uv=\sum_{p,q}\bigtriangleup_pu\bigtriangleup_qv.\nonumber
\end{eqnarray}
Define the operators $\mathcal{T},\ \mathcal{R}$ as follows,
\begin{eqnarray}
&&\mathcal{T}_uv=\sum_{p\leq q-2}\bigtriangleup_pu\bigtriangleup_qv=\sum_{q}S_{q-1}u\bigtriangleup_qv,\nonumber\\
&&\mathcal{R}(u,v)=\sum_{|p-q|\leq1}\bigtriangleup_pu\bigtriangleup_qv.\nonumber
\end{eqnarray}
Then we have the following so-called homogeneous Bony decomposition:
\begin{eqnarray}
uv=\mathcal{T}_uv+\mathcal{T}_vu+\mathcal{R}(u,v).\nonumber
\end{eqnarray}
In order to estimate the above terms, let's recall some lemmas (see \cite{book2,book1}).
\begin{lema}\label{lemma1}
There exists a constant $C$, such that, for any couple of real numbers $(s,\sigma)$, with $\sigma$ positive and any $(p,\ r,\ r_1,\ r_2)$ in $[1,\infty]^4$ with $\frac{1}{r}=\frac{1}{r_1}+\frac{1}{r_2}$, we have
\begin{eqnarray}
&\|\mathcal{T}\|_{\mathcal{L}(L^\infty\times\dot{B}^s_{p,r};\dot{B}^s_{p,r})}\leq C\quad if\ s<\frac{\mathbb{N}}{p}\ or\ s=\frac{\mathbb{N}}{p},\ r=1,\nonumber\\
&\|\mathcal{T}\|_{\mathcal{L}(\dot{B}^{-\sigma}_{\infty,r_1}\times\dot{B}^s_{p,r_2};\dot{B}^{s-\sigma}_{p,r})}\leq C\quad if\ s-\sigma<\frac{\mathbb{N}}{p}\ or\ s-\sigma=\frac{\mathbb{N}}{p},\ r=1.\nonumber
\end{eqnarray}
\end{lema}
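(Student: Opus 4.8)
The plan is to deduce both continuity estimates from two elementary ingredients: a \emph{spectral localization} that turns $\bigtriangleup_j(\mathcal{T}_u v)$ into a sum of boundedly many dyadic blocks, and the discrete H\"older and Young (convolution) inequalities on $\ell^r(\mathbb{Z})$. For the localization, the Fourier transform of $S_{q-1}u$ is supported in a ball of radius $\lesssim 2^{q}$ while that of $\bigtriangleup_q v$ lies in the annulus $\mathcal{C}(0,\tfrac34 2^{q},\tfrac83 2^{q})$, so the Fourier transform of $S_{q-1}u\,\bigtriangleup_q v$ is supported in a fixed dilate $2^{q}\widetilde{\mathcal{C}}$ of an annulus; hence there is a universal integer $N_0$ with $\bigtriangleup_j(S_{q-1}u\,\bigtriangleup_q v)=0$ whenever $|j-q|>N_0$, and consequently $\bigtriangleup_j(\mathcal{T}_u v)=\sum_{|q-j|\le N_0}\bigtriangleup_j(S_{q-1}u\,\bigtriangleup_q v)$.

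For the first estimate I would apply $L^\infty$--$L^p$ H\"older to each summand, using that $S_{q-1}$ is convolution against a kernel with $q$-independent $L^1$ norm and that $\bigtriangleup_j$ is uniformly $L^p$ bounded, to get $\|\bigtriangleup_j(S_{q-1}u\,\bigtriangleup_q v)\|_{L^p}\lesssim\|u\|_{L^\infty}\|\bigtriangleup_q v\|_{L^p}$. Multiplying by $2^{js}$, summing over $|q-j|\le N_0$, and taking the $\ell^r$ norm in $j$ presents the right-hand side as the convolution of $(2^{qs}\|\bigtriangleup_q v\|_{L^p})_q\in\ell^r$ with the finitely supported sequence $(\mathbf{1}_{|k|\le N_0})_k\in\ell^1$, and Young's inequality gives $\|\mathcal{T}_u v\|_{\dot{B}^{s}_{p,r}}\le C\|u\|_{L^\infty}\|v\|_{\dot{B}^{s}_{p,r}}$.

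For the second estimate the scheme is identical, except the low-frequency factor is estimated with a gain $2^{q\sigma}$: writing $S_{q-1}u=\sum_{p\le q-2}\bigtriangleup_p u$ and factoring $\|\bigtriangleup_p u\|_{L^\infty}=2^{p\sigma}(2^{-p\sigma}\|\bigtriangleup_p u\|_{L^\infty})$, positivity of $\sigma$ makes the series $\sum_{p\le q-2}2^{(p-q)\sigma}$ converge, so $\|S_{q-1}u\|_{L^\infty}\lesssim 2^{q\sigma}e_q$ with $(e_q)_q\in\ell^{r_1}$ of norm $\lesssim\|u\|_{\dot{B}^{-\sigma}_{\infty,r_1}}$. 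Substituting this, multiplying by $2^{j(s-\sigma)}$ and using $2^{j(s-\sigma)}2^{q\sigma}2^{-qs}=2^{(j-q)(s-\sigma)}$, the bound becomes $\sum_{|q-j|\le N_0}2^{(j-q)(s-\sigma)}e_q c_q$ with $c_q=2^{qs}\|\bigtriangleup_q v\|_{L^p}\in\ell^{r_2}$; discrete H\"older with $\tfrac1r=\tfrac1{r_1}+\tfrac1{r_2}$ followed by Young against the finitely supported kernel $(2^{k(s-\sigma)}\mathbf{1}_{|k|\le N_0})_k$ yields $\|\mathcal{T}_u v\|_{\dot{B}^{s-\sigma}_{p,r}}\le C\|u\|_{\dot{B}^{-\sigma}_{\infty,r_1}}\|v\|_{\dot{B}^{s}_{p,r_2}}$.

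These computations are routine Littlewood--Paley bookkeeping; I expect the only place that genuinely needs the stated hypotheses to be the verification that the series $\mathcal{T}_u v=\sum_q S_{q-1}u\,\bigtriangleup_q v$ sums to a bona fide element of $S'(R^{\mathbb{N}})$ and not merely modulo polynomials. The estimates above show the partial sums $\sum_{|q|\le M}S_{q-1}u\,\bigtriangleup_q v$ are Cauchy in $\dot{B}^{s}_{p,r}$ (resp.\ $\dot{B}^{s-\sigma}_{p,r}$), so it suffices that this space embed into $S'(R^{\mathbb{N}})$, and for the low-frequency blocks this follows from Bernstein's inequality: for $\phi\in S(R^{\mathbb{N}})$, $|\langle S_{q-1}u\,\bigtriangleup_q v,\phi\rangle|\lesssim 2^{q\mathbb{N}/p}\|S_{q-1}u\,\bigtriangleup_q v\|_{L^p}\|\phi\|_{L^1}\lesssim 2^{q(\mathbb{N}/p-s)}c_q\,\|u\|_{L^\infty}\|\phi\|_{L^1}$, whose sum over $q\to-\infty$ converges precisely because $\mathbb{N}/p-s>0$, or because $r=1$ absorbs the equality $s=\mathbb{N}/p$ --- exactly the stated conditions (and likewise with $s-\sigma$ in place of $s$ for the second estimate). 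This realization step, rather than any inequality, is where I would be careful.
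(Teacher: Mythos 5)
Your proof is correct, and since the paper does not prove this lemma at all (it simply recalls it from the cited monographs of Bahouri--Chemin--Danchin and Danchin), your argument is essentially the proof given in those references: spectral localization of $S_{q-1}u\,\bigtriangleup_q v$ in dyadic annuli, H\"older plus discrete Young inequalities, the gain $2^{q\sigma}$ on $\|S_{q-1}u\|_{L^\infty}$ from $\sigma>0$, and the observation that the hypotheses $s<\frac{\mathbb{N}}{p}$ (or $s=\frac{\mathbb{N}}{p}$, $r=1$) are exactly what makes the low-frequency part of the series converge in $S'(R^{\mathbb{N}})$, i.e.\ what makes the homogeneous Besov space a genuine (complete) space of tempered distributions. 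One minor remark: your constants necessarily depend on $s$ and degenerate as $\sigma\to0$ (through $\sum_{k\geq 2}2^{-k\sigma}$ and the finite kernel $2^{k(s-\sigma)}\mathbf{1}_{|k|\le N_0}$), which matches the precise form $C^{1+|s|}$ and $C^{1+|s-\sigma|}/\sigma$ in the cited source; the absolute constant asserted in the paper's restatement is an inaccuracy of the quotation, not a defect of your proof.
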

\begin{lema}\label{lemma2}
There exists a constant $C$, such that, for any real numbers $s_1,s_2$, and $(p,\ p_1,\ p_2,\ r,\ r_1,\ r_2)$ in $[1,\infty]^6$ with
\begin{eqnarray}
s_1+s_2>0,\ \frac{1}{p}\leq\frac{1}{p_1}+\frac{1}{p_2}\leq1,\ \frac{1}{r}\leq\frac{1}{r_1}+\frac{1}{r_2}\leq1,\nonumber
\end{eqnarray}
we have
\begin{eqnarray}
\|\mathcal{R}\|_{\mathcal{L}(\dot{B}^{s_1}_{p_1,r_1}\times\dot{B}^{s_2}_{p_2,r_2};\dot{B}^{\sigma}_{p,r})}\leq C,\quad \sigma=s_1+s_2-\mathbb{N}(\frac{1}{p_1}+\frac{1}{p_2}-\frac{1}{p}),\nonumber
\end{eqnarray}
providing that $\sigma<\frac{\mathbb{N}}{p}$, or $\sigma=\frac{\mathbb{N}}{p}$ and $r=1$.
\end{lema}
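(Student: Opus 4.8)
The plan is to reduce the general statement to a ``balanced'' case in which the integrability and summability exponents add up exactly, to settle that case by the standard summation lemma for dyadic series whose spectra lie in balls, and then to recover the stated exponents by a Bernstein embedding. The first ingredient is the spectral localisation of the remainder. Grouping the terms of $\mathcal{R}$ by the larger index, I would write
\[
\mathcal{R}(u,v)=\sum_{k\in\ZZ}R_k,\qquad R_k:=\bigtriangleup_k u\,\widetilde{\bigtriangleup}_k v,\quad \widetilde{\bigtriangleup}_k:=\bigtriangleup_{k-1}+\bigtriangleup_k+\bigtriangleup_{k+1}.
\]
Since $\bigtriangleup_k u$ and $\widetilde{\bigtriangleup}_k v$ have Fourier support in annuli of size $2^k$, the product $R_k$ is spectrally supported in a ball $\mathcal{B}(0,C_0 2^k)$; consequently $\bigtriangleup_j R_k=0$ unless $k\ge j-N_0$ for a fixed integer $N_0$. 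This is the mechanism by which the high frequencies of $u$ and $v$ feed into the low frequencies of the output.

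Set $\tfrac1{p_0}:=\tfrac1{p_1}+\tfrac1{p_2}$ and $\tfrac1{\bar r}:=\tfrac1{r_1}+\tfrac1{r_2}$; the hypotheses guarantee $p_0,\bar r\in[1,\infty]$, $p_0\le p$ and $\bar r\le r$. I would first establish the balanced estimate
\[
\|\mathcal{R}(u,v)\|_{\dot B^{s_1+s_2}_{p_0,\bar r}}\le C\,\|u\|_{\dot B^{s_1}_{p_1,r_1}}\|v\|_{\dot B^{s_2}_{p_2,r_2}}.
\]
By H\"older, $\|R_k\|_{L^{p_0}}\le\|\bigtriangleup_k u\|_{L^{p_1}}\|\widetilde{\bigtriangleup}_k v\|_{L^{p_2}}$, the exponents now adding exactly so that no Bernstein loss intervenes. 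Applying $\bigtriangleup_j$, using its $L^{p_0}$ boundedness, and summing only over $k\ge j-N_0$ gives, with $s:=s_1+s_2$,
\[
2^{js}\|\bigtriangleup_j\mathcal{R}(u,v)\|_{L^{p_0}}\le C\sum_{k\ge j-N_0}2^{(j-k)s}\bigl(2^{ks}\|R_k\|_{L^{p_0}}\bigr).
\]
The crucial point, and the only place where the hypothesis $s_1+s_2>0$ is indispensable, is that the weight $\ell\mapsto 2^{-\ell s}\mathbf 1_{\{\ell\ge -N_0\}}$ lies in $\ell^1$ exactly when $s>0$; Young's inequality for the discrete convolution then bounds the $\ell^{\bar r}$ norm in $j$ of the left-hand side by $\|w\|_{\ell^1}\,\|(2^{ks}\|R_k\|_{L^{p_0}})\|_{\ell^{\bar r}}$, and a sequence H\"older inequality ($\ell^{r_1}\cdot\ell^{r_2}\hookrightarrow\ell^{\bar r}$) closes this step.

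To reach the stated exponents I would then descend. Bernstein's inequality applied block by block yields the embedding $\dot B^{s_1+s_2}_{p_0,\bar r}\hookrightarrow\dot B^{\sigma}_{p,\bar r}$ with $\sigma=s_1+s_2-\mathbb{N}(\tfrac1{p_0}-\tfrac1p)=s_1+s_2-\mathbb{N}(\tfrac1{p_1}+\tfrac1{p_2}-\tfrac1p)$, which holds because $p_0\le p$ and requires no sign condition on the index; composing with $\ell^{\bar r}\hookrightarrow\ell^r$ (as $\bar r\le r$) gives the asserted bound into $\dot B^{\sigma}_{p,r}$. The standing assumption $\sigma<\tfrac{\mathbb{N}}{p}$, or $\sigma=\tfrac{\mathbb{N}}{p}$ with $r=1$, enters only to ensure that $\dot B^{\sigma}_{p,r}$ is a genuine space of tempered distributions, so that the inequality is meaningful; when $r=1$ these constraints force $\bar r=r=1$ and are automatically compatible with the last embedding.

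I expect the main obstacle to be organisational rather than computational. Because $\sigma$ may be zero or negative, one cannot run the ball-summation argument directly at regularity $\sigma$, since that lemma collapses when the exponent fails to be strictly positive (the geometric weight ceases to be summable). The whole art is therefore to carry out the summation at the \emph{auxiliary} regularity $s_1+s_2>0$ in $\dot B^{s_1+s_2}_{p_0,\bar r}$, where the convolution weight is $\ell^1$, and only afterwards to trade integrability for regularity through the unconditional Bernstein embedding. Performing the Bernstein step prematurely would convert $s_1+s_2$ into $\sigma$ inside the summation and destroy precisely the positive exponent on which convergence rests.
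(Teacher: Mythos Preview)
Your argument is correct and is precisely the standard proof one finds in the references the paper cites. Note, however, that the paper itself does \emph{not} prove this lemma: it is stated as a recalled fact with the line ``let's recall some lemmas (see \cite{book2,book1})'' and no proof is given. Your sketch therefore supplies what the paper omits, and it follows the textbook route from Bahouri--Chemin--Danchin: localise each $R_k=\bigtriangleup_k u\,\widetilde{\bigtriangleup}_k v$ spectrally in a ball of radius $\sim 2^k$, run the $\ell^1$-weighted convolution estimate at the positive regularity $s_1+s_2$ with integrability exponent $p_0$ satisfying $\tfrac1{p_0}=\tfrac1{p_1}+\tfrac1{p_2}$, and then descend to $(\sigma,p,r)$ via the Bernstein/Besov embedding. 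Your remark that the positivity of $s_1+s_2$ must be exploited \emph{before} any Bernstein trade-off is exactly the point that makes the argument work; the restriction $\sigma<\tfrac{\mathbb{N}}{p}$ (or $\sigma=\tfrac{\mathbb{N}}{p}$, $r=1$) indeed serves only to keep the target space inside $\mathcal{S}'$, and it is worth noting that it automatically transfers to the intermediate space $\dot B^{s_1+s_2}_{p_0,\bar r}$ since $s_1+s_2-\tfrac{\mathbb{N}}{p_0}=\sigma-\tfrac{\mathbb{N}}{p}$ and $\bar r\le r$.
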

The initial problem of heat equation reads
\begin{eqnarray}
\left\{\begin{array}{ll}
& v_t-a\Delta v=G, \\
& v|_{t=0}=v_0.
\end{array}\right.\label{heat}
\end{eqnarray}
The following lemma can be found in \cite{book2,book1}.
\begin{lema}\label{lemma3}
Let $T>0,\ s\in R$, and $[\rho,\ p,\ r]\in [1,\infty]^3$. Assume that $v_0\in\dot{B}^s_{p,r}$ and $G\in\widetilde{L}^\rho(\dot{B}^{s-2+\frac{2}{\rho}}_{p,r})$, Then (\ref{heat}) has a unique solution $v\in \widetilde{L}^1(\dot{B}^{s}_{p,r})\cap\widetilde{L}^\infty(\dot{B}^{s-2}_{p,r})$, satisfying
\begin{eqnarray}
a^{\frac{1}{\rho_1}}\|v\|_{\widetilde{L}^\rho(\dot{B}^{s+\frac{2}{\rho}}_{p,r})}\leq C\left(\|v_0\|_{\dot{B}^s_{p,r}}+a^{\frac{1}{\rho}-1}\|G\|_{\widetilde{L}^\rho(\dot{B}^{s-2+\frac{2}{\rho}}_{p,r})}\right),\quad \rho_1\in[\rho,\infty],\nonumber
\end{eqnarray}
in addition if $r$ is finite, then $v\in C([0,T];\dot{B}^s_{p,r})$.
\end{lema}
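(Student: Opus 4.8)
The plan is to reduce the estimate to a frequency-by-frequency analysis of the heat flow and then reassemble the dyadic pieces in the Chemin--Lerner norm. Since the projector $\bigtriangleup_q=\varphi(2^{-q}D)$ is a Fourier multiplier, it commutes with the heat operator $\partial_t-a\Delta$; applying it to (\ref{heat}) gives, for each $q\in\mathbb{Z}$, the scalar problem $\partial_t\bigtriangleup_q v-a\Delta\bigtriangleup_q v=\bigtriangleup_q G$ with datum $\bigtriangleup_q v_0$, whose Duhamel representation is
\[
\bigtriangleup_q v(t)=e^{at\Delta}\bigtriangleup_q v_0+\int_0^t e^{a(t-\tau)\Delta}\bigtriangleup_q G(\tau)\,d\tau .
\]
Everything then hinges on a uniform smoothing estimate for $e^{at\Delta}$ acting on functions spectrally supported in the annulus $\mathcal{C}(0,\tfrac34 2^q,\tfrac83 2^q)$.

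The key tool, which I expect to be the main technical point, is the frequency-localized decay
\[
\|e^{at\Delta}\bigtriangleup_q f\|_{L^p}\le C\,e^{-c\,a 2^{2q} t}\,\|\bigtriangleup_q f\|_{L^p},
\]
with $c,C>0$ independent of $q,a,t,p$. I would prove it by writing $e^{at\Delta}\bigtriangleup_q f=g_{q,t}*\bigtriangleup_q f$ with $g_{q,t}=\mathscr{F}^{-1}\big(e^{-at|\xi|^2}\varphi(2^{-q}\xi)\big)$, rescaling $\xi=2^q\eta$ to move the spectrum onto the fixed annulus $\mbox{\bf supp}\,\varphi$, and bounding $\|g_{q,t}\|_{L^1}$. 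On that fixed annulus $|\eta|^2$ is comparable to $1$, so the symbol $e^{-a2^{2q}t|\eta|^2}\varphi(\eta)$ and all its $\eta$-derivatives are dominated by $C\,e^{-c\,a2^{2q}t}$ (differentiation produces only polynomial-in-$(a2^{2q}t)$ factors, absorbed into the exponential); integrating by parts $\mathbb{N}+1$ times in the inverse Fourier transform yields $\|g_{q,t}\|_{L^1}\le C\,e^{-c\,a2^{2q}t}$, and Young's convolution inequality finishes the pointwise bound. Substituting into the Duhamel identity gives
\[
\|\bigtriangleup_q v(t)\|_{L^p}\le C\,e^{-c\,a2^{2q}t}\|\bigtriangleup_q v_0\|_{L^p}+C\int_0^t e^{-c\,a2^{2q}(t-\tau)}\|\bigtriangleup_q G(\tau)\|_{L^p}\,d\tau .
\]

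To assemble the estimate I would take the $L^{\rho_1}(0,T)$ norm in time. For the homogeneous term $\|e^{-c\,a2^{2q}\cdot}\|_{L^{\rho_1}_T}\lesssim(a2^{2q})^{-1/\rho_1}$, so after multiplying by $2^{q(s+2/\rho_1)}$ the powers of $2^q$ collapse and the surviving prefactor is exactly $a^{-1/\rho_1}2^{qs}$. For the inhomogeneous term the time integral is a convolution of the exponential kernel with $\|\bigtriangleup_q G\|_{L^p}$, so Young's inequality with $1/m=1-1/\rho+1/\rho_1$ produces $\|e^{-c\,a2^{2q}\cdot}\|_{L^m_T}\lesssim(a2^{2q})^{-1/m}$; here the requirement $m\ge1$ is \emph{precisely} the hypothesis $\rho_1\in[\rho,\infty]$. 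A short bookkeeping of exponents gives $2/\rho_1-2/m=-2+2/\rho$, so the power of $2^q$ lands exactly on the index $s-2+2/\rho$ carried by $G$, while the powers of $a$ combine into $a^{1/\rho-1-1/\rho_1}$. Multiplying through by $a^{1/\rho_1}$ and taking the $\ell^r(\mathbb{Z})$ norm of the dyadic pieces (using Minkowski's inequality to commute $\ell^r$ with the time integral in the inhomogeneous term) yields the claimed bound for every $\rho_1\in[\rho,\infty]$, the Besov norms being replaced by tilde norms by construction.

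Finally, uniqueness is immediate from linearity: the difference of two solutions solves (\ref{heat}) with $v_0=0$ and $G=0$, so the estimate forces it to vanish. For the continuity statement when $r<\infty$, each $t\mapsto\bigtriangleup_q v(t)$ is continuous into $L^p$ by dominated convergence in the Duhamel formula, while the case $\rho_1=\infty$ of the estimate just proved dominates the weighted sequence $\big(2^{qs}\|\bigtriangleup_q v(t)\|_{L^p}\big)_q$, uniformly in $t$, by an $\ell^r$-summable sequence controlled by $\|v_0\|_{\dot{B}^s_{p,r}}+a^{1/\rho-1}\|G\|_{\widetilde{L}^\rho(\dot{B}^{s-2+2/\rho}_{p,r})}$; hence for finite $r$ the tail is uniformly small, the partial sums converge uniformly, and $v\in C([0,T];\dot{B}^s_{p,r})$. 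The stated space memberships are read off as the corresponding special cases of the family. The only genuinely delicate step is the annulus decay estimate; once it is in hand, the remainder is the algebra of the powers of $a$ and $2^q$ dictated by scaling.
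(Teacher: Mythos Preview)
The paper does not supply its own proof of this lemma: it simply states that the result ``can be found in \cite{book2,book1}'' and moves on. So there is nothing in the paper to compare against beyond the references themselves.

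Your argument is exactly the standard one from those references (Bahouri--Chemin--Danchin, and Danchin's lecture notes): localize with $\bigtriangleup_q$, use the annulus smoothing bound $\|e^{at\Delta}\bigtriangleup_q f\|_{L^p}\le Ce^{-c\,a2^{2q}t}\|\bigtriangleup_q f\|_{L^p}$, feed it into Duhamel, take the $L^{\rho_1}_T$ norm, apply Young's convolution inequality in time for the forcing term (which is precisely where $\rho_1\ge\rho$ enters), and finish with $\ell^r$ over $q$. Your exponent bookkeeping for both the $2^q$ and $a$ factors is correct, and the continuity argument via uniformly small dyadic tails when $r<\infty$ is the right one. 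In short, your proposal is correct and coincides with the proof in the cited sources; the paper itself offers no alternative.

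One minor remark: the lemma as printed in the paper contains evident typos (the left-hand side should carry $\rho_1$ rather than $\rho$ in both the Lebesgue and Besov exponents, and the stated space memberships have swapped indices). You have implicitly read the statement in its intended form, which is appropriate.
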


\section{Proof of main results}\label{3}
Let $\mathcal{P}$ denote the Leray projector on solenoidal vector fields which is defined by
\begin{eqnarray}
\mathcal{P}=I-\nabla\Delta^{-1}\mbox{{\bf div}}.\nonumber
\end{eqnarray}
By operator $\mathcal{P}$, we can project the second equation of (\ref{prob1}) onto the divergence free vector field. Then the pressure $p$ can be eliminated.
It is easy to notice that $\mathcal{P}$ is a homogeneous multiplier of degree zero. Denote $\tau=d-\bar{d_0}$. We only need to consider the following equations
\begin{eqnarray}
\left\{\begin{array}{ll}
&u_t-\mu\Delta u+\mathcal{P}\left[u\cdot\nabla u+\nabla(\nabla \tau\odot\nabla \tau)\right]=0,\\
&\tau_t-\Delta \tau+ u\cdot\nabla \tau-|\nabla \tau|^2\tau-|\nabla \tau|^2\bar{d_0}=0,
\end{array}\right.\label{prob1c}
\end{eqnarray}
with initial conditions
\begin{eqnarray}
u|_{t=0}=u_0(x),\qquad \tau|_{t=0}=\tau_0=d_0(x)-\bar{d_0},\label{inic}
\end{eqnarray}
and far field behaviors
\begin{eqnarray}
u\rightarrow 0,\ \tau\rightarrow 0,\ as\ |x|\rightarrow\infty.\label{inic1}
\end{eqnarray}
Instead of proving  Theorem \ref{th1} and Theorem \ref{th2}, we should only consider the following theorems.
\begin{thm}\label{th3}
Let $\mathbb{N}\geq2$, $u_0\in\dot{B}_{2,1}^{\frac{\mathbb{N}}{2}-1}$, $\tau_0\in\dot{B}_{2,1}^{\frac{\mathbb{N}}{2}}$ in (\ref{inic}),
and
\begin{eqnarray}
E_0=\|u_0\|_{\dot{B}_{2,1}^{\frac{\mathbb{N}}{2}-1}}+\|\tau_0\|_{\dot{B}_{2,1}^{\frac{\mathbb{N}}{2}}}.\nonumber
\end{eqnarray}
\flushleft
\begin{enumerate}
\begin{description}
\item[I.] There exists a constant $T>0$, such that system (\ref{prob1c})-(\ref{inic1}) has a unique strong solution
 $(u,\tau)$ on $[0,T]\times R^\mathbb{N}$,
\begin{eqnarray}
(u,\ \tau)\in\widetilde{L}^1_T(\dot{B}_{2,1}^{\frac{\mathbb{N}}{2}+1})\cap C([0,T];\dot{B}_{2,1}^{\frac{\mathbb{N}}{2}-1})\times\widetilde{L}^1_T(\dot{B}_{2,1}^{\frac{\mathbb{N}}{2}+2})\cap C([0,T];\dot{B}_{2,1}^{\frac{\mathbb{N}}{2}}).\nonumber
\end{eqnarray}
In addition, for any $\rho_1,\ \rho_2\in [1,\infty]$,
\begin{eqnarray}
\|u\|_{\widetilde{L}^{\rho_1}_T(\dot{B}_{2,1}^{\frac{\mathbb{N}}{2}-1+\frac{2}{\rho_1}})}+
\|\tau\|_{\widetilde{L}^{\rho_2}_T(\dot{B}_{2,1}^{\frac{\mathbb{N}}{2}+\frac{2}{\rho_2}})}\leq C(E_0)\label{e1}
\end{eqnarray}
holds.
\item[II.] There exists a constant $\delta_0\geq0$, such that if $E_0\leq\delta_0$, system (\ref{prob1c})-(\ref{inic1}) has a unique global strong solution $(u,\ \tau)$ on $[0,\infty)\times R^\mathbb{N}$,
\begin{eqnarray}
(u,\ \tau)\in\widetilde{L}^1(\dot{B}_{2,1}^{\frac{\mathbb{N}}{2}+1})\cap C([0,\infty);\dot{B}_{2,1}^{\frac{\mathbb{N}}{2}-1})\times\widetilde{L}^1(\dot{B}_{2,1}^{\frac{\mathbb{N}}{2}+2})\cap C([0,\infty);\dot{B}_{2,1}^{\frac{\mathbb{N}}{2}}).\nonumber
\end{eqnarray}
In addition, for any $\rho_1,\ \rho_2\in [1,\infty]$,
 \begin{eqnarray}
\|u\|_{\widetilde{L}^{\rho_1}(\dot{B}_{2,1}^{\frac{\mathbb{N}}{2}-1+\frac{2}{\rho_1}})}+
\|\tau\|_{\widetilde{L}^{\rho_2}(\dot{B}_{2,1}^{\frac{\mathbb{N}}{2}+\frac{2}{\rho_2}})}\leq C(\delta_0)\label{e2}
\end{eqnarray}
holds.
\end{description}
\end{enumerate}
\end{thm}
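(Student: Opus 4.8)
The plan is a Picard iteration built on the heat flow, with uniform bounds closed in Chemin--Lerner spaces by combining the parabolic smoothing estimate of Lemma \ref{lemma3} with the paraproduct and remainder bounds of Lemmas \ref{lemma1}--\ref{lemma2}, followed by a contraction passage to the limit and a difference estimate for uniqueness. Set $(u^0,\tau^0)=(e^{\mu t\Delta}u_0,\,e^{t\Delta}\tau_0)$ and define $(u^{n+1},\tau^{n+1})$, via Lemma \ref{lemma3} (recall $\mathcal{P}$ is a zeroth-order multiplier), as the solution of the linear heat system with data $(u_0,\tau_0)$ and right-hand sides
\[
-\mathcal{P}\big[u^n\cdot\nabla u^n+\nabla(\nabla\tau^n\odot\nabla\tau^n)\big],\qquad -u^n\cdot\nabla\tau^n+|\nabla\tau^n|^2\tau^n+|\nabla\tau^n|^2\bar{d_0}.
\]
It is convenient to split the controlled quantities into the \emph{base-level} part $B_T:=\|u\|_{\widetilde{L}^\infty_T(\dot{B}^{\frac{\mathbb{N}}{2}-1}_{2,1})}+\|\tau\|_{\widetilde{L}^\infty_T(\dot{B}^{\frac{\mathbb{N}}{2}}_{2,1})}$ and the \emph{higher, time-integrated} part
\[
A_T:=\|u\|_{\widetilde{L}^2_T(\dot{B}^{\frac{\mathbb{N}}{2}}_{2,1})}+\|u\|_{\widetilde{L}^1_T(\dot{B}^{\frac{\mathbb{N}}{2}+1}_{2,1})}+\|\tau\|_{\widetilde{L}^2_T(\dot{B}^{\frac{\mathbb{N}}{2}+1}_{2,1})}+\|\tau\|_{\widetilde{L}^1_T(\dot{B}^{\frac{\mathbb{N}}{2}+2}_{2,1})}.
\]
The crucial observation is that $B_T(u^0,\tau^0)\leq C_0E_0$ for every $T$, whereas $A_T(u^0,\tau^0)\to 0$ as $T\to 0$ by dominated convergence on the ($\ell^1$-summable) series defining it, each dyadic component of the free heat flow contributing a $T$-dependent factor that vanishes; at the same time $A_\infty(u^0,\tau^0)\leq C_0E_0$.

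For the uniform estimate I would use $\mathrm{div}\,u=0$ to write $u\cdot\nabla u=\mathrm{div}(u\otimes u)$, the Bony decomposition, the algebra property and $L^\infty$-embedding of $\dot{B}^{\frac{\mathbb{N}}{2}}_{2,1}$ (here $\mathbb{N}\geq2$, so $\frac{\mathbb{N}}{2}>0$ and Lemmas \ref{lemma1}--\ref{lemma2} apply at the endpoint $r=1$), and Hölder in time, to bound the first right-hand side in $\widetilde{L}^1_T(\dot{B}^{\frac{\mathbb{N}}{2}-1}_{2,1})$ by $\|u\|_{\widetilde{L}^2_T(\dot{B}^{\frac{\mathbb{N}}{2}}_{2,1})}^2+\|\tau\|_{\widetilde{L}^2_T(\dot{B}^{\frac{\mathbb{N}}{2}+1}_{2,1})}^2$ and the second in $\widetilde{L}^1_T(\dot{B}^{\frac{\mathbb{N}}{2}}_{2,1})$ by $\|u\|_{\widetilde{L}^2_T(\dot{B}^{\frac{\mathbb{N}}{2}}_{2,1})}\|\tau\|_{\widetilde{L}^2_T(\dot{B}^{\frac{\mathbb{N}}{2}+1}_{2,1})}+\|\tau\|_{\widetilde{L}^2_T(\dot{B}^{\frac{\mathbb{N}}{2}+1}_{2,1})}^2\big(1+\|\tau\|_{\widetilde{L}^\infty_T(\dot{B}^{\frac{\mathbb{N}}{2}}_{2,1})}\big)$; thus both sources are $\lesssim A_T^2(1+B_T)$ at the $n$-th iterate. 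Feeding this into Lemma \ref{lemma3} (with $\rho=1$ and $\rho_1\in\{1,2,\infty\}$, the viscosity-dependent weights absorbed into $C$), and splitting each new iterate into its free-evolution and forced parts, yields
\begin{align*}
B_T(u^{n+1},\tau^{n+1})&\leq C_0E_0+CA_T(u^n,\tau^n)^2\big(1+B_T(u^n,\tau^n)\big),\\
A_T(u^{n+1},\tau^{n+1})&\leq A_T(u^0,\tau^0)+CA_T(u^n,\tau^n)^2\big(1+B_T(u^n,\tau^n)\big).
\end{align*}
For Part I, choose $T$ so small that $\eta:=A_T(u^0,\tau^0)$ satisfies $4C\eta(1+2C_0E_0)\leq1$ and $4C\eta^2(1+2C_0E_0)\leq C_0E_0$; an induction then gives $A_T(u^n,\tau^n)\leq2\eta$ and $B_T(u^n,\tau^n)\leq2C_0E_0$ for all $n$. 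For Part II, take $T=\infty$: since then also $A_\infty(u^0,\tau^0)\leq C_0E_0$, the same two inequalities close provided $E_0\leq\delta_0$ is small enough.

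To finish I would show $(u^n,\tau^n)$ is a Cauchy sequence in the space $\widetilde{L}^\infty_T(\dot{B}^{\frac{\mathbb{N}}{2}-2}_{2,1})\cap\widetilde{L}^1_T(\dot{B}^{\frac{\mathbb{N}}{2}}_{2,1})$ for $u$ and $\widetilde{L}^\infty_T(\dot{B}^{\frac{\mathbb{N}}{2}-1}_{2,1})\cap\widetilde{L}^1_T(\dot{B}^{\frac{\mathbb{N}}{2}+1}_{2,1})$ for $\tau$ (one derivative below the solution space): the successive differences solve linear heat systems whose right-hand sides are, by multilinearity of the nonlinear terms and the same product laws used with one derivative less, controlled in $\widetilde{L}^1_T(\dot{B}^{\frac{\mathbb{N}}{2}-2}_{2,1})$, resp.\ $\widetilde{L}^1_T(\dot{B}^{\frac{\mathbb{N}}{2}-1}_{2,1})$, by a uniformly bounded factor times the preceding difference, with contraction constant $<\tfrac12$ on the chosen interval. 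Passing to the limit gives a solution $(u,\tau)$ of (\ref{prob1c})--(\ref{inic1}) lying in the asserted space by the uniform bound and Fatou's lemma, with $(u,\tau)\in C([0,T];\dot{B}^{\frac{\mathbb{N}}{2}-1}_{2,1})\times C([0,T];\dot{B}^{\frac{\mathbb{N}}{2}}_{2,1})$ by the last assertion of Lemma \ref{lemma3} (the index $r=1$ being finite), while (\ref{e1}), resp.\ (\ref{e2}), for all $\rho_1,\rho_2\in[1,\infty]$ follows from Lemma \ref{lemma3}. Uniqueness follows from the same difference argument applied to two solutions with equal data, closed by Gronwall's inequality and a finite bootstrap across $[0,T]$ (with a slightly more careful interpolation for the velocity difference in the borderline case $\mathbb{N}=2$, e.g.\ a logarithmic one).

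The step I expect to be the main obstacle is the uniform nonlinear bound at critical regularity: every product law is invoked at its endpoint, where $\dot{B}^{\frac{\mathbb{N}}{2}}_{2,1}$ is only barely a Banach algebra and barely embeds in $L^\infty$, so one must work throughout inside the Chemin--Lerner scale $\widetilde{L}^\rho_T(\dot{B}^s_{2,1})$ and interpolate between its members with care. Moreover, the cubic term $|\nabla\tau|^2\tau$ in the $d$-equation makes the a priori inequality read (controlled norm)$\,\leq\,$(data)$\,+\,CA_T^2(1+B_T)$ rather than purely quadratic, so for general data it cannot be closed by smallness of $E_0$; the real engine of Part I is instead that the time-integrated norm $A_T$ of the \emph{free} heat evolution of the data vanishes as $T\to 0$, whereas Part II genuinely requires $E_0$ small.
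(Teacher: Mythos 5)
Your proposal follows essentially the same route as the paper: a Picard iteration on the linearized heat system, uniform bounds obtained by feeding the paraproduct and remainder laws of Lemmas \ref{lemma1}--\ref{lemma2} into the parabolic estimate of Lemma \ref{lemma3}, with the smallness for Part I coming not from $E_0$ but from the fact that the time-integrated Chemin--Lerner norms of the free heat evolution vanish as $T\to0$ (this is exactly the content of the paper's Lemma \ref{lemma4}, Case I), and for Part II from $E_0\leq\delta_0$ (Lemma \ref{lemma4}, Case II). Your split into the base quantity $B_T$ and the time-integrated quantity $A_T$, with the recursion $A_T(u^{n+1},\tau^{n+1})\leq A_T(u^0,\tau^0)+CA_T^2(1+B_T)$ and $B_T(u^{n+1},\tau^{n+1})\leq C_0E_0+CA_T^2(1+B_T)$, is the same mechanism as the paper's induction establishing (\ref{unb1})--(\ref{unb2}), only with $\widetilde{L}^1_T$, $\widetilde{L}^2_T$ auxiliary norms in place of the paper's $\widetilde{L}^4_T$ norms; this is a cosmetic difference. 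The one genuine divergence is the convergence/uniqueness step: you propose to contract one derivative below the natural regularity ($\dot{B}_{2,1}^{\frac{\mathbb{N}}{2}-2}$ for the velocity difference), which at $\mathbb{N}=2$ hits the endpoint $s_1+s_2=0$ of Lemma \ref{lemma2} and forces the logarithmic/Osgood repair you only allude to. The paper instead exploits that all iterates (and, for uniqueness, both solutions) share the same initial data, so the differences have zero data and the contraction closes at the natural level $\widetilde{L}^\infty_T(\dot{B}_{2,1}^{\frac{\mathbb{N}}{2}-1})\times\widetilde{L}^\infty_T(\dot{B}_{2,1}^{\frac{\mathbb{N}}{2}})$, by estimating the Duhamel terms $I_1,\dots,I_5$ in $\widetilde{L}^2_T$ at regularities $\frac{\mathbb{N}}{2}-2$ and $\frac{\mathbb{N}}{2}-1$; this is valid for every $\mathbb{N}\geq2$ with no borderline case and yields uniqueness by iterating the same estimate over small subintervals. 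Your scheme is complete modulo actually carrying out that $\mathbb{N}=2$ repair; adopting the same-level contraction removes the issue altogether.
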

\begin{thm}\label{th4}
Let $(u,\ \tau)$ be a strong solution of (\ref{prob1c})-(\ref{inic1}) on $[0,T]\times R^{\mathbb{N}}$ in Theorem \ref{th3}. For any $T'>T$, such that if
\begin{eqnarray}
&u\in\widetilde{L}_{T'}^{\rho_1}(\dot{B}_{\infty,\infty}^{-1+\frac{2}{\rho_1}}),\  \tau\in\widetilde{L}_{T'}^{\rho_2}(\dot{B}_{\infty,\infty}^{\frac{2}{\rho_2}})\ and\ \tau\in\widetilde{L}_{T'}^{\rho_3}(\dot{B}_{2,\infty}^{\frac{\mathbb{N}}{2}+\frac{2}{\rho_3}}),\nonumber\\
&\frac{\mathbb{N}}{2}+\frac{2}{\rho_2}+\frac{2}{\rho_3}-2>0,\nonumber\\
&\|u\|_{\widetilde{L}_{T'}^{\rho_1}(\dot{B}_{\infty,\infty}^{-1+\frac{2}{\rho_1}})}+ \|\tau\|_{\widetilde{L}_{T'}^{\rho_2}(\dot{B}_{\infty,\infty}^{\frac{2}{\rho_2}})}+
\|\tau\|_{\widetilde{L}_{T'}^{\rho_3}(\dot{B}_{2,\infty}^{\frac{\mathbb{N}}{2}+\frac{2}{\rho_3}})}<\infty\label{serrin}
\end{eqnarray}
hold, where $(\rho_1,\ \rho_2,\ \rho_3)\in (2,\infty)^3$,
then $(u,\ \tau)$ is a strong solution on $[0,T']\times R^{\mathbb{N}}$ in Theorem \ref{th3}.
\end{thm}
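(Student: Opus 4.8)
The plan is to run a continuation (bootstrap) argument: assume the solution $(u,\tau)$ exists on $[0,T]$ with the regularity of Theorem \ref{th3}, and show that the Serrin-type bound \eqref{serrin} on the larger interval $[0,T']$ forces the critical norms $\|u\|_{\widetilde L^\infty_a(\dot B^{\frac{\mathbb N}{2}-1}_{2,1})}$ and $\|\tau\|_{\widetilde L^\infty_a(\dot B^{\frac{\mathbb N}{2}}_{2,1})}$ to stay finite (in fact bounded uniformly) as $a\uparrow T'$; by the local existence part of Theorem \ref{th3} applied from a time close to $T'$ with these bounded data, the solution then extends past $T'$, and uniqueness from Theorem \ref{th3} identifies it. So the heart of the matter is an a priori estimate: controlling the critical Besov norms of the solution on $[0,a]$ by a quantity depending only on $E_0$, the length $T'$, and the subcritical quantity in \eqref{serrin}.

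First I would localize in frequency: apply $\triangle_q$ to each equation in \eqref{prob1c}, use Lemma \ref{lemma3} (maximal regularity for the heat semigroup, with $a=\mu$ resp.\ $a=1$) to write, for every dyadic $q$,
\begin{eqnarray}
2^{q(\frac{\mathbb N}{2}-1)}\|\triangle_q u\|_{L^\infty_aL^2}&\lesssim& 2^{q(\frac{\mathbb N}{2}-1)}\|\triangle_q u_0\|_{L^2}+2^{q(\frac{\mathbb N}{2}-1)}\big\|\triangle_q\mathcal P[u\cdot\nabla u+\nabla(\nabla\tau\odot\nabla\tau)]\big\|_{L^1_aL^2},\nonumber
\end{eqnarray}
and similarly for $\tau$ with right-hand side $u\cdot\nabla\tau-|\nabla\tau|^2\tau-|\nabla\tau|^2\bar d_0$, then sum in $q$ against $\ell^1$. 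The nonlinear terms are handled by Bony's decomposition together with Lemmas \ref{lemma1} and \ref{lemma2}, but the key point is to place each factor so that the "rough" factor is measured in the weak, subcritical spaces appearing in \eqref{serrin} ($\dot B^{-1+2/\rho_1}_{\infty,\infty}$ for $u$, $\dot B^{2/\rho_2}_{\infty,\infty}$ and $\dot B^{\frac{\mathbb N}{2}+2/\rho_3}_{2,\infty}$ for $\tau$), while the "smooth" factor carries the critical regularity one wants to close on. For instance in $u\cdot\nabla u$ one writes $\mathcal T_{u}\nabla u+\mathcal T_{\nabla u}u+\mathcal R(u,\nabla u)$ and estimates $\mathcal T_u\nabla u$ via the second bound of Lemma \ref{lemma1} with $u\in\dot B^{-1+2/\rho_1}_{\infty,\infty}$ and $\nabla u$ in the critical space; the terms $\nabla(\nabla\tau\odot\nabla\tau)$ and $|\nabla\tau|^2(\tau+\bar d_0)$ are quadratic (resp.\ cubic) in $\nabla\tau$ and are split so that one or two factors of $\nabla\tau$ sit in $\dot B^{-1+2/\rho_2}_{\infty,\infty}$ or $\dot B^{\frac{\mathbb N}{2}-1+2/\rho_3}_{2,\infty}$ (obtained from the $\tau$-norms in \eqref{serrin} by shifting one derivative) and the remaining factor in the critical $\tau$-space. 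A Hölder inequality in time turns the product of the $\widetilde L^{\rho_i}$ norms and the $\widetilde L^{\rho_i'}$ norms of the smooth factors into the right shape, the time exponents matching because $\tfrac1{\rho_1}+\tfrac1{\rho_1'}=1$, etc., and the condition $\frac{\mathbb N}{2}+\frac{2}{\rho_2}+\frac{2}{\rho_3}-2>0$ is exactly what makes the relevant sum of regularity indices in Lemma \ref{lemma2} positive so the remainder terms $\mathcal R$ are admissible.

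Carrying this out yields a closed inequality of the form
\begin{eqnarray}
Y(a)\ \le\ C\,E_0+C\,\big(A(a)+A(a)^2+A(a)^3\big)\,Y(a)+C\,A(a)\,,\nonumber
\end{eqnarray}
where $Y(a)=\|u\|_{\widetilde L^\infty_a(\dot B^{\frac{\mathbb N}{2}-1}_{2,1})}+\|u\|_{\widetilde L^1_a(\dot B^{\frac{\mathbb N}{2}+1}_{2,1})}+\|\tau\|_{\widetilde L^\infty_a(\dot B^{\frac{\mathbb N}{2}}_{2,1})}+\|\tau\|_{\widetilde L^1_a(\dot B^{\frac{\mathbb N}{2}+2}_{2,1})}$ and $A(a)$ denotes the left side of \eqref{serrin} restricted to $[0,a]$. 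Since $\rho_1,\rho_2,\rho_3<\infty$, the function $a\mapsto A(a)$ is continuous and can be made arbitrarily small on short subintervals (absolute continuity of the $\widetilde L^{\rho_i}$ norms in the upper limit), so one partitions $[0,T']$ into finitely many pieces on each of which $C(A+A^2+A^3)\le\tfrac12$, absorbs the $Y(a)$ term on the left, and iterates across the pieces; this gives a finite bound for $Y(T')$, hence the solution stays in the Theorem \ref{th3} class up to $T'$ and, by the local theory plus uniqueness, is a strong solution on $[0,T']$. I expect the main obstacle to be bookkeeping in the paraproduct estimates for the cubic term $|\nabla\tau|^2\tau$ in the $\dot B^{\frac{\mathbb N}{2}}_{2,1}$-norm: one must distribute three derivatives-worth of $\tau$ among the three factors so that every factor lands in an allowed space (two in the weak subcritical spaces, one critical), check the positivity-of-regularity hypothesis of Lemma \ref{lemma2} at each splitting, and verify that the time-integrability exponents combine correctly via Hölder; the constant term $|\nabla\tau|^2\bar d_0$ is the same computation with one fewer factor and is what actually requires the strict inequality $\frac{\mathbb N}{2}+\frac{2}{\rho_2}+\frac{2}{\rho_3}>2$.
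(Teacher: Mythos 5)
Your proposal follows essentially the same route as the paper: use $\rho_1,\rho_2,\rho_3<\infty$ to make the norms in (\ref{serrin}) small on short subintervals of $[T,T']$, bound the nonlinear terms by (small Serrin-type norm) times (critical norm) via Bony decomposition and Lemmas \ref{lemma1}--\ref{lemma3} so the critical norm can be absorbed, iterate over finitely many subintervals with the data updated at each starting time, and conclude by the local existence and uniqueness of Theorem \ref{th3}. The only cosmetic difference is that the paper closes by contradiction at a first blow-up time using its estimate (\ref{s1}), whereas you iterate directly to bound the critical norm up to $T'$; the substance is identical.
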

Next let us introduce the heat semi-group operator $e^{at\Delta}$. Let
\begin{eqnarray}
v=e^{at\Delta}v_0,\nonumber
\end{eqnarray}
then $v$ solves problem (\ref{heat}) with $G$ replaced by zero.
\begin{lema}\label{lemma4}
Let $v=e^{at\Delta}v_0$, $v_0\in \dot{B}^s_{2,1},\ s\in R$.
\flushleft
\begin{enumerate}
\begin{description}
\item[I.] Assume $\|v_0\|_{\dot{B}^s_{2,1}}\leq C_0$, $\rho\in [1,\infty)$. For any small $\varepsilon_0>0$, there exists $T_0$, such that the following estimate holds
\begin{eqnarray}
\|v\|_{\widetilde{L}_T^\rho(\dot{B}^{s+\frac{2}{\rho}}_{2,1})}\leq \varepsilon_0,\ \ \mathrm{for\ any}\ T\leq T_0.\label{l1}
\end{eqnarray}
\item[II.] Assume $\rho\in [1,\infty]$. For any small $\varepsilon_0>0$, there exists $\delta_0$, such that if $\|v_0\|_{\dot{B}^s_{2,1}}\leq \delta_0$, the following estimate holds
\begin{eqnarray}
\|v\|_{\widetilde{L}^\rho(\dot{B}^{s+\frac{2}{\rho}}_{2,1})}\leq \varepsilon_0.\label{l2}
\end{eqnarray}
\end{description}
\end{enumerate}
\end{lema}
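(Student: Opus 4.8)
The plan is to analyze the single dyadic block $\bigtriangleup_q v = e^{at\Delta}\bigtriangleup_q v_0$ and then reassemble via the $\ell^1$ sum that defines the $\dot B^s_{2,1}$ and $\widetilde L^\rho$ norms. First I would recall the standard heat-kernel localization estimate: since $\bigtriangleup_q v_0$ has Fourier support in the annulus $\mathcal C(0,2^q\tfrac34,2^q\tfrac83)$, there is a constant $c_0>0$ with $\|e^{at\Delta}\bigtriangleup_q v_0\|_{L^2}\le C e^{-c_0 a 2^{2q} t}\|\bigtriangleup_q v_0\|_{L^2}$ for all $t\ge 0$. Taking the $L^\rho$ norm in time on $(0,T)$ gives
\begin{eqnarray}
\|\bigtriangleup_q v\|_{L^\rho_T L^2}\le C\Big(\int_0^T e^{-\rho c_0 a 2^{2q}t}\,dt\Big)^{1/\rho}\|\bigtriangleup_q v_0\|_{L^2}\le C\,(a 2^{2q})^{-1/\rho}\big(1-e^{-\rho c_0 a 2^{2q}T}\big)^{1/\rho}\|\bigtriangleup_q v_0\|_{L^2}.\nonumber
\end{eqnarray}
Multiplying by $2^{q(s+2/\rho)}$, the factor $2^{2q/\rho}$ cancels against $(2^{2q})^{-1/\rho}$, so the summand is bounded by $C a^{-1/\rho}\big(1-e^{-\rho c_0 a 2^{2q}T}\big)^{1/\rho}\,2^{qs}\|\bigtriangleup_q v_0\|_{L^2}$; for $\rho=\infty$ one instead gets directly $2^{qs}\|\bigtriangleup_q v_0\|_{L^2}$ with no decay factor, which is already the key to part II.

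For part I, I would fix $\varepsilon_0>0$ and split the sum over $q$ at a threshold $N$: the high-frequency part $\sum_{q>N}$ is bounded by $C a^{-1/\rho}\sum_{q>N}2^{qs}\|\bigtriangleup_q v_0\|_{L^2}$, which is the tail of a convergent series (since $v_0\in\dot B^s_{2,1}$) and hence $\le\varepsilon_0/2$ once $N$ is large enough, uniformly in $T$. For the finitely many low frequencies $q\le N$, the factor $\big(1-e^{-\rho c_0 a 2^{2q}T}\big)^{1/\rho}\to 0$ as $T\to 0$, so $\sum_{q\le N}$ can be made $\le\varepsilon_0/2$ by choosing $T_0=T_0(N,a,\varepsilon_0)$ small; this uses $\rho<\infty$ crucially. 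Combining the two pieces yields \eqref{l1} for all $T\le T_0$. For part II, I would simply observe that for every $\rho\in[1,\infty]$ the bound above, taken on the whole half-line $(0,\infty)$, gives $\|v\|_{\widetilde L^\rho(\dot B^{s+2/\rho}_{2,1})}\le C a^{-1/\rho}\|v_0\|_{\dot B^s_{2,1}}$ (this is just the homogeneous version of Lemma \ref{lemma3} with $G=0$), so taking $\delta_0=\varepsilon_0/(C a^{-1/\rho})$ forces \eqref{l2} whenever $\|v_0\|_{\dot B^s_{2,1}}\le\delta_0$.

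The only genuinely delicate point is the uniformity in $T$ of the high-frequency tail in part I: one must peel off $N$ \emph{before} sending $T\to 0$, because the per-block decay factor is not uniform in $q$, and the $a^{-1/\rho}$ prefactor (finite for fixed viscosity $a>0$) must be absorbed into the constant. Everything else is the routine Bernstein/heat-kernel bookkeeping that underlies Lemma \ref{lemma3}; I would cite \cite{book2,book1} for the block-level decay estimate and keep the write-up to the two-region splitting argument just described.
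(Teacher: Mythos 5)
Your proposal is correct and follows essentially the same route as the paper's proof: the blockwise heat decay $\|\bigtriangleup_q v(t)\|_{L^2}\le Ce^{-c_0a2^{2q}t}\|\bigtriangleup_q v_0\|_{L^2}$, integration in time to produce the factor $\bigl(a2^{2q}\bigr)^{-1/\rho}\bigl(1-e^{-\rho c_0a2^{2q}T}\bigr)^{1/\rho}$, a frequency splitting for part I (tail made small uniformly in $T$ by convergence of $\sum_q 2^{qs}\|\bigtriangleup_q v_0\|_{L^2}$, the remaining frequencies by letting $T\to0$), and the global bound $\|v\|_{\widetilde{L}^\rho(\dot{B}^{s+2/\rho}_{2,1})}\le Ca^{-1/\rho}\|v_0\|_{\dot{B}^s_{2,1}}$ with $\delta_0\sim\varepsilon_0/C$ for part II. The only slip is calling $\{q\le N\}$ ``finitely many'' frequencies (the paper splits at $|q|\le N$, which is genuinely finite), but your conclusion still holds since $\bigl(1-e^{-\rho c_0a2^{2q}T}\bigr)^{1/\rho}$ is increasing in $q$ and hence uniformly bounded over $q\le N$ by its value at $q=N$, which tends to $0$ as $T\to0$.
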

\begin{proof}
Firstly, we know that
\begin{eqnarray}
\widehat{v}(t,\xi)=e^{-at|\xi|^2}\widehat{v}_0(\xi).\nonumber
\end{eqnarray}
Then we have
\begin{eqnarray}
\bigtriangleup_qv(t)=\mathscr{F}^{-1}(e^{-at|\xi|^2}\varphi_q(\xi)\widehat{v}_0(\xi))=e^{at\Delta}\bigtriangleup_qv_0,\nonumber
\end{eqnarray}
and
\begin{eqnarray}
\|\bigtriangleup_qv(t)\|_{L^2}\leq e^{-ka2^{2q}t}\|\bigtriangleup_qv_0\|_{L^2},\label{hini}
\end{eqnarray}
where $k$ is a constant.\\
\noindent{\bf Case I.}  By using (\ref{hini}), we obtain
\begin{eqnarray}
\|\bigtriangleup_qv(t)\|_{L^\rho_TL^2}\leq \left(\frac{1-e^{-kaT\rho2^{2q}}}{ka\rho2^{2q}}\right)^{\frac{1}{\rho}}\|\bigtriangleup_qv_0\|_{L^2}.\nonumber
\end{eqnarray}
It is easy to find $N>0$, such that
\begin{eqnarray}
\sum_{|q|> N}\left(\frac{1-e^{-kaT\rho2^{2q}}}{ka\rho2^{2q}}\right)^{\frac{1}{\rho}}2^{qs}\|\bigtriangleup_qv_0\|_{L^2}\leq \frac{\varepsilon_0}{2}.\label{l3}
\end{eqnarray}
For above fixed $N$, let $T$ be small enough such that
\begin{eqnarray}
\sum_{|q|\leq N}\left(\frac{1-e^{-kaT\rho2^{2q}}}{ka\rho2^{2q}}\right)^{\frac{1}{\rho}}2^{qs}\|\bigtriangleup_qv_0\|_{L^2}\leq \frac{\varepsilon_0}{2}.\label{l4}
\end{eqnarray}
Combining (\ref{l3}) and (\ref{l4}), we obtain (\ref{l1}).\\
\noindent{\bf Case II.}
By using (\ref{hini}), we have
\begin{eqnarray}
\|\bigtriangleup_qv(t)\|_{L^\rho L^2}\leq \left(\frac{1}{ka\rho2^{2q}}\right)^{\frac{1}{\rho}}\|\bigtriangleup_qv_0\|_{L^2}.\nonumber
\end{eqnarray}
Then
\begin{eqnarray}
\|v\|_{L^\rho(\dot{B}^{s+\frac{2}{\rho}}_{2,1})}\leq C\|v_0\|_{\dot{B}^{s}_{2,1}}.\nonumber
\end{eqnarray}
By choosing $\delta_0\leq \frac{\varepsilon_0}{C}$, we finish the proof.
\end{proof}
\noindent{\bf Proof of Theorem \ref{th3}.}\\
\par In this part, we will use the iterative method to prove Theorem  \ref{th3}, which is separated into three steps. Firstly we point out that the case I in the following proof corresponds to I in Theorem  \ref{th3} and case II corresponds to II in Theorem  \ref{th3}. Also we should keep in mind that $\dot{B}^{s}_{2,1},\ s>\frac{\mathbb{N}}{2}$ is not a Banach space. Then let us consider the following linear equations,
\begin{eqnarray}
\left\{\begin{array}{ll}
&\partial_tu_n-\mu\Delta u_n=-\mathcal{P}\left[u_{n-1}\cdot\nabla u_{n-1}+\nabla(\nabla \tau_{n-1}\odot\nabla \tau_{n-1})\right],\\
&\partial_t\tau_n-\Delta \tau_n=- u_{n-1}\cdot\nabla \tau_{n-1}+|\nabla \tau_{n-1}|^2\tau_{n-1}+|\nabla \tau_{n-1}|^2\bar{d_0},
\end{array}\right.\label{linear}
\end{eqnarray}
with initial conditions
\begin{eqnarray}
&u_n|_{t=0}=u_0,\nonumber\\
&\tau_n|_{t=0}=\tau_0.\nonumber
\end{eqnarray}
Let us set $u_1=e^{at\Delta}u_0$, $\tau_1=e^{at\Delta}\tau_0$ and begin our proof.\\
\noindent {\bf First step: Uniform boundedness}\\
\noindent{\bf Case I.}
We claim that the following estimates hold for some $T>0$,
\begin{eqnarray}
&&\|u_n\|_{\widetilde{L}^4_T(\dot{B}_{2,1}^{\frac{\mathbb{N}}{2}-\frac{1}{2}})}+
\|\tau_n\|_{\widetilde{L}^4_T(\dot{B}_{2,1}^{\frac{\mathbb{N}}{2}+\frac{1}{2}})}\leq (C+1)\varepsilon_0,\quad n=1,2,3\cdots,\label{unb1}\\
&&\|\tau_n\|_{\widetilde{L}^\infty_T(\dot{B}_{2,1}^{\frac{\mathbb{N}}{2}})}\leq(C+1)E_0,\quad n=1,2,3\cdots.\label{unb2}
\end{eqnarray}
Here $C$ is an absolute constant.
Indeed by Lemma \ref{lemma4}, we can find (\ref{unb1}) and (\ref{unb2}) hold for $n=1$, providing $T$ is small.
Assume (\ref{unb1}) and (\ref{unb2}) hold for $n-1$. By using Lemma \ref{lemma1}-Lemma \ref{lemma3}, we can choose $\varepsilon_0\leq\frac{1}{(C+1)\sqrt{\left(C+1+\frac{2}{E_0}\right)}}$ to obtain
\begin{align}
\|\tau_n\|_{\widetilde{L}^\infty_T(\dot{B}_{2,1}^{\frac{\mathbb{N}}{2}})}&\leq CE_0+(C+1)^2\varepsilon^2_0(C+1)E_0+2(C+1)^2\varepsilon^2_0\nonumber\\
&\leq E_0\left(C+(C+1)^3\varepsilon^2_0+\frac{2(C+1)^2\varepsilon^2_0}{E_0}\right)\nonumber\\
&\leq (C+1)E_0,\nonumber
\end{align}
where $C$ is the constant appeared in Lemma \ref{lemma1}-Lemma \ref{lemma3}.
 Then by Lemma \ref{lemma1}-Lemma \ref{lemma3} again, we have
\begin{align}
\|u_n\|_{\widetilde{L}^4_T(\dot{B}_{2,1}^{\frac{\mathbb{N}}{2}-\frac{1}{2}})}+
\|\tau_n\|_{\widetilde{L}^4_T(\dot{B}_{2,1}^{\frac{\mathbb{N}}{2}+\frac{1}{2}})}&\leq C\varepsilon_0+4(C+1)^2\varepsilon^2_0+(C+1)^3\varepsilon^2_0E_0\nonumber\\
&\leq(C+1)\varepsilon_0,\nonumber
\end{align}
by choosing $\varepsilon_0\leq \min\{\frac{1}{(C+1)\sqrt{\left(C+1+\frac{2}{E_0}\right)}},\ \frac{1}{4(C+1)\left[1+(C+1)E_0\right]}\}$. So (\ref{unb1}) and (\ref{unb2}) are proved.
In addition, we can get from (\ref{unb1}), (\ref{unb2}) and Lemma \ref{lemma3} that
\begin{eqnarray}
\|u_n\|_{\widetilde{L}^\infty_T(\dot{B}_{2,1}^{\frac{\mathbb{N}}{2}-1})\cap\widetilde{L}^1_T(\dot{B}_{2,1}^{\frac{\mathbb{N}}{2}+1})}+
\|\tau_n\|_{\widetilde{L}^\infty_T(\dot{B}_{2,1}^{\frac{\mathbb{N}}{2}})\cap\widetilde{L}^1_T(\dot{B}_{2,1}^{\frac{\mathbb{N}}{2}+2})}\leq C(E_0),\nonumber
\end{eqnarray}
and more precisely,
\begin{eqnarray}
\|u_n\|_{\widetilde{L}^\rho_T(\dot{B}_{2,1}^{\frac{\mathbb{N}}{2}-1+\frac{2}{\rho}})}+
\|\tau_n\|_{\widetilde{L}^\rho_T(\dot{B}_{2,1}^{\frac{\mathbb{N}}{2}+\frac{2}{\rho}})}\leq (C(\rho)+1)\varepsilon_0,\quad \rho\in[1,\infty).\label{small}
\end{eqnarray}
\noindent{\bf Case II.}
For the small initial data, it is simple to prove
\begin{eqnarray}
\|u_n\|_{\widetilde{L}^\rho(\dot{B}_{2,1}^{\frac{\mathbb{N}}{2}-1+\frac{2}{\rho}})}+
\|\tau_n\|_{\widetilde{L}^\rho(\dot{B}_{2,1}^{\frac{\mathbb{N}}{2}+\frac{2}{\rho}})}\leq (C+1)\varepsilon_0,\quad \rho\in[1,\infty],\nonumber
\end{eqnarray}
providing $\delta_0$ is small enough. Here we omit the details.\\
\noindent {\bf Second step: Convergence}\\
\noindent{\bf Case I.} We will prove $\{(u_n,\ \tau_n)|n=1,2\cdots\}$ is a Cauchy sequence. Firstly, let's consider
\begin{eqnarray}
\|u_{m+n+1}-u_{n+1}\|_{\widetilde{L}^\infty_T(\dot{B}_{2,1}^{\frac{\mathbb{N}}{2}-1})}\ and\ \|\tau_{m+n+1}-\tau_{n+1}\|_{\widetilde{L}^\infty_T(\dot{B}_{2,1}^{\frac{\mathbb{N}}{2}})}.\nonumber
\end{eqnarray}
According to the proof of Lemma \ref{lemma3}, we need to estimate the following terms:
\begin{align}
&I_1(t,x)=\int^t_0e^{-\mu(t-s)\Delta}\mathcal{P}\left[\left(u_{m+n}\cdot\nabla u_{m+n}\right)-\left(u_{n}\cdot\nabla u_{n}\right)\right],\nonumber\\
&I_2(t,x)=\int^t_0e^{-\mu(t-s)\Delta}\mathcal{P}\left[\left(\nabla(\nabla \tau_{m+n}\odot\nabla \tau_{m+n})\right)-\left(\nabla(\nabla \tau_{n}\odot\nabla \tau_{n})\right)\right],\nonumber\\
&I_3(t,x)=\int^t_0e^{-(t-s)\Delta}\left[\left(u_{m+n}\cdot\nabla \tau_{m+n}\right)-\left(u_{n}\cdot\nabla \tau_{n}\right)\right],\nonumber\\
&I_4(t,x)=\int^t_0e^{-(t-s)\Delta}\left[\left(|\nabla \tau_{m+n}|^2\tau_{m+n}\right)-\left(|\nabla \tau_{n}|^2\tau_{n}\right)\right],\nonumber\\
&I_5(t,x)=\int^t_0e^{-(t-s)\Delta}\left[\left(|\nabla \tau_{m+n}|^2\bar{d_0}\right)-\left(|\nabla \tau_{n}|^2\bar{d_0}\right)\right].\nonumber
\end{align}
By using (\ref{small}), Lemma \ref{lemma1} and Lemma \ref{lemma2}, we can obtain
\begin{align}
\|I_1(t,x)\|_{\widetilde{L}^2_T(\dot{B}_{2,1}^{\frac{\mathbb{N}}{2}-2})}&\leq
C\|u_{m+n}-u_n\|_{\widetilde{L}^\infty_T(\dot{B}_{2,1}^{\frac{\mathbb{N}}{2}-1})}(\|u_{m+n}\|_{\widetilde{L}^2_T(\dot{B}_{2,1}^{\frac{\mathbb{N}}{2}})}
+\|u_{n}\|_{\widetilde{L}^2_T(\dot{B}_{2,1}^{\frac{\mathbb{N}}{2}})}),\nonumber\\
\|I_2(t,x)\|_{\widetilde{L}^2_T(\dot{B}_{2,1}^{\frac{\mathbb{N}}{2}-2})}&\leq C\|\tau_{m+n}-\tau_n\|_{\widetilde{L}^\infty_T(\dot{B}_{2,1}^{\frac{\mathbb{N}}{2}})}(\|\tau_{m+n}\|_{\widetilde{L}^2_T(\dot{B}_{2,1}^{\frac{\mathbb{N}}{2}+1})}
+\|\tau_{n}\|_{\widetilde{L}^2_T(\dot{B}_{2,1}^{\frac{\mathbb{N}}{2}+1})}),\nonumber\\
\|I_3(t,x)\|_{\widetilde{L}^2_T(\dot{B}_{2,1}^{\frac{\mathbb{N}}{2}-1})}&\leq C\|u_{m+n}-u_n\|_{\widetilde{L}^\infty_T(\dot{B}_{2,1}^{\frac{\mathbb{N}}{2}-1})}\|\tau_{m+n}\|_{\widetilde{L}^2_T(\dot{B}_{2,1}^{\frac{\mathbb{N}}{2}+1})}\nonumber\\
&\quad+C\|u_{n}\|_{\widetilde{L}^2_T(\dot{B}_{2,1}^{\frac{\mathbb{N}}{2}})}\|\tau_{m+n}-\tau_n\|_{\widetilde{L}^\infty_T(\dot{B}_{2,1}^{\frac{\mathbb{N}}{2}})},\nonumber\\
\|I_4(t,x)\|_{\widetilde{L}^2_T(\dot{B}_{2,1}^{\frac{\mathbb{N}}{2}-1})}&\leq C\|\tau_{m+n}-\tau_n\|_{\widetilde{L}^\infty_T(\dot{B}_{2,1}^{\frac{\mathbb{N}}{2}})}\|\tau_{m+n}+\tau_n\|_{\widetilde{L}^2_T(\dot{B}_{2,1}^{\frac{\mathbb{N}}{2}+1})}
\|\tau_{m+n}\|_{\widetilde{L}^\infty_T(\dot{B}_{2,1}^{\frac{\mathbb{N}}{2}})}\nonumber\\
&\quad+C\|\tau_n\|^2_{\widetilde{L}^4_T(\dot{B}_{2,1}^{\frac{\mathbb{N}}{2}+\frac{1}{2}})}\|\tau_{m+n}-\tau_n\|_{\widetilde{L}^\infty_T(\dot{B}_{2,1}^{\frac{\mathbb{N}}{2}})},\nonumber\\
\|I_5(t,x)\|_{\widetilde{L}^2_T(\dot{B}_{2,1}^{\frac{\mathbb{N}}{2}-1})}&\leq C\|\tau_{m+n}-\tau_n\|_{\widetilde{L}^\infty_T(\dot{B}_{2,1}^{\frac{\mathbb{N}}{2}})}\|\tau_{m+n}+\tau_n\|_{\widetilde{L}^2_T(\dot{B}_{2,1}^{\frac{\mathbb{N}}{2}+1})}.\nonumber
\end{align}
By Lemma \ref{lemma3}, we obtain
\begin{eqnarray}
\|u_{m+n+1}-u_{n+1}\|_{\widetilde{L}^\infty_T(\dot{B}_{2,1}^{\frac{\mathbb{N}}{2}-1})}+ \|\tau_{m+n+1}-\tau_{n+1}\|_{\widetilde{L}^\infty_T(\dot{B}_{2,1}^{\frac{\mathbb{N}}{2}})}\nonumber\\
\leq \frac{1}{2}\left(\|u_{m+n}-u_{n}\|_{\widetilde{L}^\infty_T(\dot{B}_{2,1}^{\frac{\mathbb{N}}{2}-1})}+ \|\tau_{m+n}-\tau_{n}\|_{\widetilde{L}^\infty_T(\dot{B}_{2,1}^{\frac{\mathbb{N}}{2}})}\right),\nonumber
\end{eqnarray}
by choosing $\varepsilon_0$ small enough. Then $\{(u_m,\ \tau_m)|n=1,2\cdots\}$ is a Cauchy sequence in Banach space $\widetilde{L}^\infty_T(\dot{B}_{2,1}^{\frac{\mathbb{N}}{2}-1})\times\widetilde{L}^\infty_T(\dot{B}_{2,1}^{\frac{\mathbb{N}}{2}})$.
Let
\begin{eqnarray}
u_m\rightarrow u\quad in\ \widetilde{L}^\infty_T(\dot{B}_{2,1}^{\frac{\mathbb{N}}{2}-1}),\nonumber\\
\tau_m\rightarrow\tau\quad in\ \widetilde{L}^\infty_T(\dot{B}_{2,1}^{\frac{\mathbb{N}}{2}}).\nonumber
\end{eqnarray}
Then $(u,\ \tau)$ is a solution of (\ref{prob1c})-(\ref{inic1}) on $(0,T)\times R^{\mathbb{N}}$ satisfying (\ref{e1}). By Lemma \ref{lemma3}, we can get $(u,\ \tau)\in C([0,T];\dot{B}_{2,1}^{\frac{\mathbb{N}}{2}-1})\times C([0,T];\dot{B}_{2,1}^{\frac{\mathbb{N}}{2}})$.\\
\noindent{\bf Case II.} For the small initial data, we can obtain that
\begin{eqnarray}
\|u_{m+n+1}-u_{n+1}\|_{\widetilde{L}^\infty(\dot{B}_{2,1}^{\frac{\mathbb{N}}{2}-1})}+ \|\tau_{m+n+1}-\tau_{n+1}\|_{\widetilde{L}^\infty(\dot{B}_{2,1}^{\frac{\mathbb{N}}{2}})}\nonumber\\
\leq \frac{1}{2}\left(\|u_{m+n}-u_{n}\|_{\widetilde{L}^\infty(\dot{B}_{2,1}^{\frac{\mathbb{N}}{2}-1})}+ \|\tau_{m+n}-\tau_{n}\|_{\widetilde{L}^\infty(\dot{B}_{2,1}^{\frac{\mathbb{N}}{2}})}\right),\nonumber
\end{eqnarray}
by choosing $\delta_0$ small enough.
The procedure is simple, so we omit the details.
Let
\begin{eqnarray}
u_m\rightarrow u\quad in\ \widetilde{L}^\infty(\dot{B}_{2,1}^{\frac{\mathbb{N}}{2}-1}),\nonumber\\
\tau_m\rightarrow\tau\quad in\ \widetilde{L}^\infty(\dot{B}_{2,1}^{\frac{\mathbb{N}}{2}}).\nonumber
\end{eqnarray}
Then $(u,\ \tau)$ is a solution of (\ref{prob1c})-(\ref{inic1}) on $(0,\infty)\times R^{\mathbb{N}}$ satisfying (\ref{e2}). By Lemma \ref{lemma3}, we can get $(u,\ \tau)\in C([0,\infty);\dot{B}_{2,1}^{\frac{\mathbb{N}}{2}-1})\times C([0,\infty);\dot{B}_{2,1}^{\frac{\mathbb{N}}{2}})$.\\
\noindent {\bf Third step: Uniqueness}\\
Let $(u,\ \tau)$, $(v,\ n)$ be two solutions of (\ref{prob1c})-(\ref{inic1}). Denote $w=u-v$, $\Gamma=\tau-n$. By using the procedure of proving $\{(u_m,\ \tau_m)|\ m=1,2,\cdots\}$ is a Cauchy sequence in the second step on small interval $[0,\delta]$(some small constant $\delta>0$), we can prove $w=0,\ \Gamma=0$ on $[0,\delta]$. By repeating the procedure, we can obtain $w=0,\ \Gamma=0$ on $[0,T](or\ [0,\infty))$. We finish the proof of Theorem \ref{th3}.\\

\noindent{\bf Proof of Theorem \ref{th4}.}\\
We note that $(u,\ \tau)\in C([0,T];\dot{B}_{2,1}^{\frac{\mathbb{N}}{2}-1})\times C([0,T];\dot{B}_{2,1}^{\frac{\mathbb{N}}{2}})$. By theorem \ref{th3}, we can find $T_1$ such that $T_1>T$ and $(u,\ \tau)$ is a solution on $[0,T_1]\times R^\mathbb{N}$. Since (\ref{serrin}) holds, for any small $\varepsilon_0>0$, we can find $\delta>0$ which depends only on $T'$, such that
\begin{eqnarray}
\|u\|_{\widetilde{L}_{(T_2,T_2+\delta)}^{\rho_1}(\dot{B}_{\infty,\infty}^{-1+\frac{2}{\rho_1}})}+ \|d\|_{\widetilde{L}_{(T_2,T_2+\delta)}^{\rho_2}(\dot{B}_{\infty,\infty}^{\frac{2}{\rho_2}})}+
\|d\|_{\widetilde{L}_{(T_2,T_2+\delta)}^{\rho_3}(\dot{B}_{2,\infty}^{\frac{\mathbb{N}}{2}+\frac{2}{\rho_3}})}\leq\varepsilon_0.\label{last}
\end{eqnarray}
Here we should guarantee $[T_2,T_2+\delta]\in [T,T']$. Let $T_2$ be the initial time. Then by using Lemma \ref{lemma3} and the estimates in the second step of the proof of Theorem \ref{th3}, we can get
\begin{eqnarray}
\|u\|_{\widetilde{L}_{(T_2,T_2+\delta)}^\infty(\dot{B}_{2,1}^{\frac{\mathbb{N}}{2}-1})}+ \|\tau\|_{\widetilde{L}_{(T_2,T_2+\delta)}^\infty(\dot{B}_{2,1}^{\frac{\mathbb{N}}{2}})}\leq \left(\|u(T_2,\cdot)\|_{\dot{B}_{2,1}^{\frac{\mathbb{N}}{2}-1}}
+\|\tau(T_2,\cdot)\|_{\dot{B}_{2,1}^{\frac{\mathbb{N}}{2}}}\right)\nonumber\\
+\theta\left(\|u\|_{\widetilde{L}_{(T_2,T_2+\delta)}^\infty(\dot{B}_{2,1}^{\frac{\mathbb{N}}{2}-1})}+ \|\tau\|_{\widetilde{L}_{(T_2,T_2+\delta)}^\infty(\dot{B}_{2,1}^{\frac{\mathbb{N}}{2}})}\right),\nonumber
\end{eqnarray}
where $0<\theta<1$, and providing $\varepsilon_0$ is small enough. Then we get
\begin{eqnarray}
\|u\|_{\widetilde{L}_{(T_2,T_2+\delta)}^\infty(\dot{B}_{2,1}^{\frac{\mathbb{N}}{2}-1})}+ \|\tau\|_{\widetilde{L}_{(T_2,T_2+\delta)}^\infty(\dot{B}_{2,1}^{\frac{\mathbb{N}}{2}})}\leq C\left(\|u(T_2,\cdot)\|_{\dot{B}_{2,1}^{\frac{\mathbb{N}}{2}-1}}
+\|\tau(T_2,\cdot)\|_{\dot{B}_{2,1}^{\frac{\mathbb{N}}{2}}}\right),\label{s1}
\end{eqnarray}
where the constant $C$ is independent of $T_2$. Let's repeat above procedure, we claim that $T_1\geq T'$. Indeed, if it is not true, we can find $\widetilde{T}\in [T_1,T']$, such that($\widetilde{T}$ is the first blow-up time)
\begin{eqnarray}
\lim_{t\rightarrow\widetilde{T}}\left(\|u(t,\cdot)\|_{\dot{B}_{2,1}^{\frac{\mathbb{N}}{2}-1}}
+\|\tau(t,\cdot)\|_{\dot{B}_{2,1}^{\frac{\mathbb{N}}{2}}}\right)=\infty,\label{s2}
\end{eqnarray}
and $\left(\|u(\widetilde{T}-\frac{\delta}{2},\cdot)\|_{\dot{B}_{2,1}^{\frac{\mathbb{N}}{2}-1}}
+\|\tau(\widetilde{T}-\frac{\delta}{2},\cdot)\|_{\dot{B}_{2,1}^{\frac{\mathbb{N}}{2}}}\right)$ is bounded. But we find that (\ref{s1}) with $T_2$ replaced by $\widetilde{T}-\frac{\delta}{2}$ is contradicting with (\ref{s2}). So we get $T_1\geq T'$, and finish the proof of Theorem \ref{th4}.

\end{document}